\documentclass{siamltex}
\usepackage{amsmath}
\usepackage{amssymb,amsfonts}
\usepackage{verbatim}
\usepackage{graphicx,epstopdf}
\usepackage{xcolor}
\usepackage{bbm}
\setlength{\voffset}{-0.1in}
\setlength{\hoffset}{0.7in}


\def \R{\mathbb{R}} 

\def \N{\mathbb{N}}

\def \N0{\mathbb{N}_0}

\def \a{\alpha}

\def \W{\Omega}
\def \phi{\varphi}
\def \ge{\gamma_e}

\def \1{\mathbbm{1}}

\def \<{\left<}
\def \>{\right>}


\def \mQ{\mathcal{Q}}
\def \Th{\mathcal{T}_h}

\def \EI{\mathcal{E}_h^I}
\def \EB{\mathcal{E}_h^B}

\newcommand{\E}{\mathcal{E}_h}
\newcommand{\Eh}{\mathcal{E}_h}
\def \mJ{\mathcal{J}}
\def \mJh{\mathcal{J}_h}


\def \wto{\rightharpoonup}

\def \dv{\cdot\nu_{e}}
\def \grad{\nabla}
\def \lss{\lesssim}

\def \dx[#1]{\ensuremath{\operatorname{d}\!{#1}}}

\let\div\undefined
\DeclareMathOperator*{\div}{div}

\DeclareMathOperator*{\argmin}{arg\,min}

\DeclareMathOperator*{\sgn}{sgn}

\DeclareMathOperator*{\BV}{BV}

\newcommand{\Ome}{\Omega}

\newcommand{\nab}{\nabla}



\newtheorem{defn}{Definition}


\graphicspath{ {figures/} }

\begin{document}

\title{A Discontinuous Ritz Method for a Class of Calculus of Variations Problems}

\author{Xiaobing Feng\thanks{Department of Mathematics, The University of Tennessee,  Knoxville, TN 37996, U.S.A.
		(xfeng@math.utk.edu)}
\and 
Stefan Schnake\thanks{Department of Mathematics, The University of Tennessee,  Knoxville, TN 37996, U.S.A.
		 {\em Current Address}: Department of Mathematics, University of Oklahoma, Norman, OK 73019, U.S.A.
		 (sschnake@ou.edu) }
		}

\maketitle

\begin{abstract}
This paper develops an analogue (or counterpart) to discontinuous Galerkin (DG) methods for
approximating a general class of calculus of variations problems.  The proposed method, called 
the discontinuous Ritz (DR) method, constructs a numerical solution by minimizing a discrete energy over 
DG function spaces.  The discrete energy includes standard penalization terms as 
well as the DG finite element (DG-FE) numerical derivatives developed recently by Feng, Lewis, and Neilan 
in \cite{Feng2013}.  It is proved that the proposed DR method converges and that the DG-FE numerical derivatives
exhibit a compactness  property which is desirable and crucial for applying the proposed  DR  method  
to problems with more complex energy functionals. Numerical tests are provided on the classical $p$-Laplace problem to gauge the 
performance of the proposed DR method.   
\end{abstract}

\begin{keywords}
Variational problems, minimizers, discontinuous Galerkin (DG) methods, DG finite element numerical calculus, compactness, convergence.
\end{keywords}

\begin{AMS}
	65N30, 
	65N12, 
	35J60
\end{AMS}

\pagestyle{myheadings}
\thispagestyle{plain}
\markboth{XIAOBING FENG AND STEFAN SCHNAKE}{DG METHODS FOR VARITATIONAL PROBLEMS}

\section{Introduction} \label{section1}

In this paper we develop a numerical method using totally discontinuous piecewise polynomial functions for 
approximating solutions to the following problem from the calculus of variations: Find $u\in W^{1,p}_g(\W)$ such that
\begin{align}\label{dgcalc:minproblem}
\mJ(u)\leq \mJ(v)\qquad\forall v\in W_g^{1,p}(\W),
\end{align}
where
\begin{align}\label{dgcalc:eqnenergy}
\mJ(v) = \int_{\W} f(\grad v,v,x) \dx[x]
\end{align}
is the energy functional, $f:\R^d\times\R\times\W\to\R_+$ is called the energy density, $\W\subset\R^d$ is an open bounded domain, and 
$$W^{1,p}_g(\W) := \{v\in W^{1,p}(\W) : u = g\text{ on }\partial\W\}.$$  
If such a $u$ exists, it is called a minimizer of $\mJ$ over $W_g^{1,p}(\W)$ and is written as
\begin{align}\label{dgcalc:argmin}
u\in\argmin_{v\in W^{1,p}_g(\W)}\mJ(v).
\end{align}

Although the calculus of variations is an old field  in mathematics, its growth and boundary have kept expanding because new applications arising from physics, differential geometry, image processing, materials science, and optimal control (just to name a few). Those problems are often formulated as calculus of variations problems, among them are the Brachistochrone problem \cite{Dacorogna2015b}, the minimal surface problem \cite{Evans2010}, and the Erickson energy for nematic liquid crystals \cite{Nochetto2017}. 

Numerically solving those problems means to approximate the exact minimizer $u$ of $\mJ$ over $W_g^{1,p}(\W)$ via a numerical approximation $u_h$.  As expected, there are many methods for 
constructing an approximate  solution $u_h$. The existing numerical methods can be divided into two categories: the indirect approach and the direct approach. The indirect approach is based on the fact that the minimizer $u$ must satisfy, in some sense, the following Euler-Lagrange equation:
\begin{align} \label{eqn:eulerlagrange}
\sum_{i=1}^{d}\frac{\partial}{\partial x_i}\left(f_{\xi_i}(\grad u,u,x)\right) = f_u(\grad u,u,x) \qquad\forall x\in\W.
\end{align}
As equation \eqref{eqn:eulerlagrange} is a second order PDE in divergence (or conservative) form, it can be discretized using a variety of methods such as finite difference, finite element, discontinuous Galerkin and 
spectral method for constructing an approximate solution $u_h$.  This indirect approach is often the preferred approach because of the wealthy amount of numerical methods available for discretizing PDEs. However, this approach does have two drawbacks.  First, the Euler-Lagrange equation is only a necessary condition for a minimizer and it may not be a sufficient one.  More information must be known about $\mJ$ in order to determine if the solution of the Euler-Lagrange equation indeed globally minimizes $\mJ$.  Second, a discretization of the PDE may lose some important properties of the original energy functional, such as conservation or dissipation laws.   
On the other hand, the direct approach seeks an approximate solution $u_h$ by first constructing a discrete energy functional $\mJh$ and then setting 
\begin{align}\label{dgcalc:eqn:approx}
u_h \in \argmin_{v_h\in X_h}\mJh(v_h),
\end{align}
where $X_h$ is a finite-dimensional space which approximates $W_g^{1,p}(\W)$.  Since problem \eqref{dgcalc:eqn:approx} is equivalent to a minimization problem in $\R^N$, a variety of algorithms (or solvers)  can be employed to compute $u_h$.  For example, we may minimize $\mJ_h$ by using a quasi-Newton algorithm or by first deriving the (discrete) Euler-Lagrange equation to $\mJ_h$ and then solving for $u_h$.    
The key issue of this approach is how to construct a ``good'' discrete energy functional $\mJ_h$ which can ensure the convergence of $u_h$ to $u$.  One important advantage of the direct approach is that a ``good" discrete energy functional $\mJ_h$ will automatically  preserve key properties of the original energy functional $\mJ$. For example, the discrete variational derivative method by Furihata and Matsuo for 
the KdV equation, nonlinear Schr\"odinger equations, and the Cahn-Hillard equation \cite{Furihata2010}; 
the Variational DGFEM method by Buffa and Ortner \cite{Buffa2009} for calculus of variations  problems, and the finite element method  by Nochetto {\em et al.} \cite{Nochetto2017}  for nematic liquid crystals all have such a trait.   

Our goal in this paper is to develop a discontinuous Ritz (DR) framework for a class of variational 
problems described by \eqref{dgcalc:minproblem}.  Our numerical method belongs to the direct approach and takes $X_h=V_h$ - the discontinuous Galerkin (DG) space consisting of totally discontinuous piecewise polynomial functions on a mesh $\Th$ of $\W$.  We call our method a \textit{discontinuous Ritz} method because it directly approximates problem \eqref{dgcalc:minproblem}. In the special case when 
\[
\mJ(v) = \frac12 a(v,v) - F(v), 
\]
and $a(\cdot,\cdot)$ is a symmetric and coercive bilinear form,  problem \eqref{dgcalc:minproblem} is 
known as the Ritz formulation of the following Galerkin (or weak) formulation: find $u\in V$ (which is 
assumed to be a Hilbert space) such that
\[
	a(u,v) = F(v) \qquad\forall v\in V.
\]
As mentioned earlier,  the key issue we face is to construct a ``good" discrete energy functional 
$\mJ_h$.  Since DG functions are discontinuous across element edges, two roadblocks arise when 
creating a discrete energy functional $\mJ_h$ that makes sense on $V_h$.  First, $\mJ_h$ must weakly enforce  continuity and the Dirichlet boundary data. The standard way to cope with this issue in the DG framework
is to use interior penalty terms, and indeed including interior penalty terms in $\mJ_h$  is sufficient to
 obtain these properties in the limit as $h\to 0$.  Second and more importantly, these discontinuities also make DG functions not globally differentiable in general, and one has to determine how to approximate
 the gradient operator $\nabla$ in the energy functional $\mJ$.   An obvious choice is to approximate it by
 a piecewisely defined gradient operator over $\Th$. However,  such a naive choice of a discrete gradient 
 may lead to divergent numerical method \cite{Buffa2009}.  To overcome this difficulty, our idea is to use 
 the newly developed discontinuous Galerkin finite element (DG-FE) numerical derivatives (and gradient)  by 
 Feng, Lewis, and Neilan in \cite{Feng2013} as our discrete derivatives (and gradient). The bulk of this
 paper will devote to demonstrating the discrete energy functional so-constructed is a ``good" one, in the
 sense that the resulting discontinuous Ritz method  converges for a general class of energy 
functionals $\mJ$.  On the other hand,  no error estimate (or rates of convergence)  will be provided for the general framework,  such a result may only be feasible for specific problems and will be reported in a future work. 
 
The rest of the paper is organized as follows.  In Section \ref{dgcalc:notation}, we give the notation used for the paper as well as the assumptions on the density function $f$ in order to guarantee the well-posedness of problem \eqref{dgcalc:minproblem} and the convergence of the proposed discontinuous Ritz method.  In Section \ref{dgcalc:derivatives}, we illustrate the need for a proper discretization of the gradient operator 
by showing some failed choices of discrete gradients tested on Poisson problem with homogeneous Dirichlet boundary data.  In Section \ref{dgcalc:formulation}, we present the definition of the DG-FE numerical derivatives, the motivation for using it, and then define our discontinuous Ritz method.  Section \ref{dgcalc:analysis} is devoted to the convergence analysis of the proposed DR method.  We prove that the proposed DR method and the variational DGFEM by Buffa and Ortner \cite{Buffa2009} are actually equivalent schemes and verify the convergence of the proposed DR method for a class of densities $f$.  In addition, we present a compactness result using our DG-FE numerical gradient, which is of independent interests.  In Section \ref{dgcalc:examples}, we show a few numerical tests for the proposed DR  method on the $p$-Laplace problem. 

\section{Preliminaries}\label{dgcalc:notation}

\subsection{Notation}

Let $\W$ be a bounded polygonal domain in $\R^d$ ($d=1,2,3$).  For $1\leq p<\infty$, let $L^p(\W)$ and $W^{1,p}(\W)$ 
denote the usual $L^p$ space and Sobolev space on $\W$ with their standard norms.  
$(\cdot,\cdot)$ stands for the standard $L^2(\Ome)$ inner product. 
We use  $p^*>1$ to denote the Sobolev conjugate of $p$, that is, 
\begin{align}\label{dgcalc:eqnsobolevcong}
 p^* = \begin{cases} \frac{dp}{d-p} &\text{ if }p<d, \\
                     \infty         &\text{ if }p\geq d,
         \end{cases}	
\end{align}
and $q^*>1$ such that
\begin{align}\label{dgcalc:eqn:edgesobolevcong}
 q^* = \begin{cases} \frac{(d-1)p}{d-p} &\text{ if }p<d, \\
                     \infty         &\text{ if }p\geq d.
         \end{cases}	
\end{align}

Let $\Th$ be a quasi-uniform and shape regular mesh of $\W$, and let $\EI,\EB$ be the interior and boundary edges of $\Th$, and $\Eh = \EI\cup\EB$.  For any $e\in\Eh$, let $\ge>0$, the penalty parameter, be a constant on $e$ and denote $\gamma^*=\min_{e\in\EI}\ge$.

For any interior edge/face $e = \partial T^+ \cap \partial T^-\in \EI$, we define the jump and average of a scalar or
vector valued function $v$ as
\begin{align*}
[v]\big|_{e} := v^+ - v^-, \qquad
\{v\}\big|_{e} := \frac{1}{2}\left(v^+ + v^-\right),
\end{align*}
where $v^{\pm}=v|_{T^\pm}$.  On a boundary edge/face $e\in \EB$
with $e = \partial T^+\cap \partial \Omega$, we set $[v]\big|_{e} =\{v\}\big|_e =  v^+$.
For any $e\in \EI$ we use $\nu_e$ to denote the unit outward normal vector pointing in the direction of the
element with the smaller global index.  For $e\in \EB$ we set $\nu_e$ to be the outward normal to $\partial\Omega$
restricted to $e$.

We also define the broken Sobolev space 
\[
	W^{1,p}(\Th):=\prod_{T\in\Th} W^{1,p}(T)
\]
endowed with the following semi-norm and norm:
\begin{align*}
	| v |_{W^{1,p}(\Th)} &= \| \grad v \|_{L^p(\Th)} + \bigg(\sum_{e\in\EI}
	\int_e \ge h_e^{1-p} \bigl|[v] \bigr|^p\dx[S]\bigg)^{1/p}, \\
	\| v \|_{W^{1,p}(\Th)} &= | v |_{W^{1,p}(\Th)} + \bigg(\sum_{e\in\EB}\int_e \ge h_e^{1-p} |v-g|^p\dx[S]\bigg)^{1/p},
\end{align*}
where 
\[
\| \grad v \|_{L^p(\Th)} := \Bigl(\, \sum_{T\in\Th} \|\nab v\|_{L^p(T)}^p \,\Bigr)^{\frac1p}. 
\]
We define the standard discontinuous Galerkin space $V_h$ by
\[
V_h = V_h^k = \Bigl\{v_h\in L^2(\Omega);\, v_h\big|_T\in \mathbb{P}_k(T) \quad\forall T\in\Th \Bigr\},
\]
where $k\geq 0$ denotes the polynomial degree. Obviously, we have $V_h\subset W^{1,p}(\Th)$. 

\subsection{Well-posedness of calculus of variations problems}

As previously mentioned, the calculus of variations is an old field in mathematics, which gives us a solid well-posedness theory for problem \eqref{dgcalc:minproblem} with a general density function
$f$. To be precise for the remaining presentation, we shall only consider the following class of 
density functions $f$: 
\begin{enumerate}
\item[(1)] $f$ is a Carath\'{e}dory function, that is, 
\begin{itemize}
\item[(a)] $x\to f(\xi,v,x)$ is measurable for every $(\xi,v)\in \R^d\times\R$, and
\item[(b)] $(\xi,v)\to f(\xi,v,x)$ is continuous for every $x\in\W$.
\end{itemize}
\item[(2)] $\xi\to f(\xi,v,x)$ is convex for every $(v,x)\in \R\times\W$.  
\item[(3)] For fixed $1<p<\infty$, there exists constants $\a_0,\a_1>0$, $a_0,a_1\in L^1(\W)$, and $r$ and $q$ satisfying $r<p$ and $r\leq q < p^*$ such that the following growth condition holds:
\begin{align*}
\a_0\bigl( |\xi|^p-|v|^r+a_0(x) \bigr)\leq f(\xi,v,x) \leq \a_1 \bigl( |\xi|^p+|v|^q+a_1(x) \bigr).
\end{align*}
\end{enumerate}

Under above three assumptions, the direct method of calculus of variations 
(see \cite{Dacorogna2007}) shows that there exists a $u\in W_g^{1,p}(\W)$ satisfying \eqref{dgcalc:minproblem}. Moreover, if the map $(\xi,v)\to f(\xi,v,x)$ is strictly convex for a.e.~$x\in\W$, then the minimizer $u$ is unique. We refer the interested reader to 
\cite{Dacorogna2007} for the detailed proofs of these results. 

We also note that the above structure assumptions exclude the case $p=1$ and $p=\infty$. Since the spaces
$W^{1,1}$ and $W^{1,\infty}$ are non-reflexive, these two cases are expected to be difficult to deal with and
must be  considered separately.  On the other hand, we would like to mention that Stamm and Wihler in \cite{Stamm} developed a DG method for the total variation energy, which is a special problem for the case $p=1$. 
They directly discretize the $TV$-energy as 
\[
\mJ_h(v_h) = \frac{\alpha h}{2}\int_\W \sqrt{|\grad_h v_h|^2 + \beta} \, dx  + \frac{1}{2}\|f-v_h\|_{L^2(\W)}^2, 
\]
where $f$ is the given noisy image function and $\grad_h$ is the DG-FE numerical derivative introduced in Section \ref{dgcalc:formulation}.  
As we will see later, this work is in the same spirit as ours, and the numerical tests given in \cite{Stamm} are quite promising. 

\section{The choice of discrete derivatives}\label{dgcalc:derivatives}

Since functions in the discontinuous Galerkin space $V_h$ are discontinuous across element edges, 
the energy functional $\mJ$ is not defined on $V_h$.  To extend its domain to $V_h$, we define the 
following discrete energy functional:
\begin{align}
\mJ_h^*(v_h) &= \sum_{T\in\Th}\int_T f(\grad^* v_h,v_h,x) \dx[x] + \sum_{e\in\EI}\int_e \ge h_e^{1-p}\bigl|[v_h] \bigr|^p\dx[S] \\
&\quad+ \sum_{e\in\EB}\int_e \ge h_e^{1-p}|v_h-g|^p\dx[S], \nonumber
\end{align}
we note that the last two terms, which are called penalty terms, are used to weakly enforce the continuity and the Dirichlet boundary data. Here $\grad^*$ denotes an under-determined discrete gradient defined on $V_h$ or more generally on $W^{1,p}(\Th)$.  

Below we shall show that the construction of this discrete gradient $\grad^*$ is crucial to the 
convergence of the numerical method, even when penalty terms are added,
it must be defined judiciously to ensure the convergence.  
We note that the discontinuous nature of DG functions allows us to have flexibility in choosing 
the discrete gradient and to take into consideration of the properties such as simplicity and 
ease of implementation.  

The simplest approach is to define $\grad^*$ to be the piecewise gradient, that is,
$(\grad^* v)|_T = \grad(v|_T)$ for any $T\in\Th$. Obviously, such a discrete gradient is very 
easy and cheap to compute.  This gives us the following discrete energy functional:
\begin{align}\label{dgcalc:eqn:piecewiseenergy}
\mJ_h^{pw}(v_h) &= \sum_{T\in\Th}\int_T f(\grad^* v_h,v_h,x) \dx[x] + \sum_{e\in\EI}\int_e \ge h_e^{1-p}|[v_h]|^p\dx[S] \\
&\quad+ \sum_{e\in\EB}\int_e \ge h_e^{1-p}|v_h-g|^p\dx[S]. \nonumber
\end{align}
However, it is mentioned in \cite{Buffa2009} that the above approach does not always give 
a convergent scheme.  Indeed, this is true even for nice $f$. To see why it is so, let $p=2$, $g=0$, and $f(\xi,v,x) = \frac{1}{2}|\xi|^2-F(x)v$,  it is easy to check that the Euler-Lagrange equation of \eqref{dgcalc:minproblem} is the following Poisson problem:
\begin{subequations}\label{dgcalc:eqn:poiprob}
\begin{alignat}{2}
-\Delta u &= F &&\quad \text{ in }\W, \\
u &= 0 &&\quad \text{ on }\partial\W. 
\end{alignat}
\end{subequations}

Requiring the G\^ateaux derivative of $\mJ_h^{pw}$ to vanish at a potential minimizer $u_h\in V_h$, 
that is, for every $v_h\in V_h$
\[
	\frac{\text{d}}{\text{d}t} \mJ_h^{pw}(u_h+tv_h)\bigg|_{t=0} = 0 \qquad\forall v_h\in V_h,
\]
we arrive at the following problem: find $u_h\in V_h$ such that
\begin{align}\label{dg_problem}
a_h^{pw}(u_h,v_h) = (F,v_h)\qquad\forall v_h\in V_h,
\end{align}
where
\begin{align*}
a_h^{pw}(u_h,v_h) &:= \sum_{T\in\Th}\int_T \grad u_h\cdot\grad v_h\dx[x] + \sum_{e\in\EI}\int_e \frac{\ge}{h_e}[u_h][v_h]\dx[S] \\
&\quad+ \sum_{e\in\EB}\int_e \frac{\ge}{h_e} u_hv_h\dx[S]. \nonumber
\end{align*}
It is easy to verify that the bilinear form $a_h^{pw}(\cdot,\cdot)$ is coercive and continuous on $V_h$ 
for any $\ge>0$, which immediately implies the existence and uniqueness of a solution $u_h$ to 
problem \eqref{dg_problem}. However, it is not hard to prove that scheme \eqref{dg_problem}
is not consistent to the PDE problem \eqref{dgcalc:eqn:poiprob},  because
if $u$ is the weak solution to \eqref{dgcalc:eqn:poiprob}, there is a $v_h\in V_h$ such that
\[
	a_h^{pw}(u,v_h) \neq (F,v_h).
\]
Instead we have
\[
	a_h^{pw}(u,v_h) = (F,v_h) + \sum_{e\in\EI}\int_e \{\grad u\dv\}[v_h]\dx[S] \qquad \forall v_h\in V_h.
\]
We emphasize that the penalty terms are not the cause for the inconsistency, since the regularity and 
boundary data of $u$ forces them to vanish.  It is in fact the discretization of the gradient that 
causes the inconsistency. The inconsistency in this example, being $\mathcal{O}(\ge^{-1})$, leads to a non-convergent method.  To show this, we let $d=2$, $\W=(-1/2,1/2)^2$ and choose $F$ such that the 
solution $u(x,y) = (1/4-x^2)(1/4-y^2)$.  Table \ref{table:inconsist} shows the piecewise $H^1$ errors 
and rates for varying values of $\ge$. As we can see, the method is not converging to $u$ as $h\to 0$.  

\begin{table}[h]                                             
\centering                 
\caption{The piecewise $H^1$ errors and rates of convergence with various $\ge$ for the piecewise gradient discretization.  Here the polynomial degree $k=2$ is used in the test.}                                      
\begin{tabular}{|c|c|c|c|c|c|c|}                     
\hline
&\multicolumn{2}{|c|}{$\ge = 10$} & \multicolumn{2}{|c|}{$\ge = 100$} & \multicolumn{2}{|c|}{$\ge = 1000$} \\
\hline
$1/h$ & $H^1$ Error & Rate & $H^1$ Error & Rate & $H^1$ Error & Rate \\
\hline                                                         
2 & 1.69e-02 & - & 1.16e-02 & - & 1.53e-02 & - \\     
\hline                                                         
4 & 1.18e-02 & 0.52 & 2.75e-03 & 2.08 & 3.59e-03 & 2.09 \\     
\hline                                                         
8 & 1.11e-02 & 0.09 & 1.31e-03 & 1.07 & 8.60e-04 & 2.06 \\     
\hline                                                         
16 & 1.11e-02 & -0.00 & 1.31e-03 & -0.00 & 2.21e-04 & 1.96 \\  
\hline                                                         
32 & 1.12e-02 & -0.01 & 1.34e-03 & -0.04 & 1.32e-04 & 0.75 \\  
\hline                                                         
64 & 1.12e-02 & -0.01 & 1.36e-03 & -0.01 & 1.35e-04 & -0.04 \\ 
\hline                                                         
128 & 1.12e-02 & -0.00 & 1.36e-03 & -0.00 & 1.38e-04 & -0.03 \\
\hline                                                         
256 & 1.12e-02 & -0.00 & 1.36e-03 & -0.00 & 1.39e-04 & -0.01 \\
\hline                                                         
\end{tabular}                                                                                       
\label{table:inconsist}                                     
\end{table} 

We also note that the piecewise gradient discretization has the ability to produce a consistent 
scheme if we include additional terms to the discrete energy functional. For example, for the Poisson problem, 
the standard symmetric interior penalty DG bilinear form is 
\begin{align*}
a_h^{SIPDG}(u_h,v_h) &= \sum_{T\in\Th}\int_T \grad_h u_h\cdot\grad_h v_h\dx[x] \\
&\quad - \sum_{e\in\EI}\int_e [u_h]\{\grad v_h\dv\} \dx[S] - \sum_{e\in\EI}\int_e [v_h]\{\grad u_h\dv\}\dx[S] \\
&\quad+ \sum_{e\in\EI}\int_e \frac{\ge}{h_e}[u_h][v_h]\dx[S] + \sum_{e\in\EB}\int_e \frac{\ge}{h_e} u_hv_h\dx[S].
\end{align*}
It can be shown that $a_h^{SIPDG}(\cdot,\cdot)$, being symmetric, is induced by the following discrete energy functional (cf. \cite{FengLi2015}):
\begin{align*}
\mJ_h^{SIPDG}(v_h) &= \sum_{T\in\Th}\frac{1}{2}\int_T |\grad v_h|^2 \dx[x] - \sum_{e\in\EI}\int_e[v_h]\{\grad v_h\dv\}\dx[S] \\
&\quad+ \sum_{e\in\EI}\frac{1}{2}\int_e \frac{\ge}{h_e}|[v_h]|^2\dx[S] + \sum_{e\in\EB}\frac{1}{2}\int_e \frac{\ge}{h_e}|v_h-g|^2\dx[S]. \nonumber
\end{align*} 
However, this energy is specific to the Poisson problem and cannot be extended to the 
class of density functions $f$ discussed in this paper.  

While defining the numerical gradient as the piecewise gradient does not give a convergent method, 
there are examples of successful discrete gradients. In \cite{Buffa2009}, Buffa and Ortner introduced a \textit{variational DGFEM}.  This method provided a consistent discretization of the gradient that produces a convergent method for a class of convex and coercive densities.  Their discrete gradient is defined 
using the piecewise gradient with help of the following lifting operator $R:W^{1,p}(\Th) \to [V_h]^d$:
\begin{align}\label{dgcalc-eqn-lifting}
\int_\W R(v)\cdot\phi_h = -\sum_{e\in\EI}\int_e [v]\{\phi_h\dv\}\dx[S] \quad\forall \phi_h\in [V_h]^d.
\end{align}
The motivation of using this lifting operator arises from accounting for the contribution of the 
jumps of a discontinuous function to its distributional derivative. They then defined the following 
discrete energy functional:
\begin{align}\label{dgcalc:eqn:buffaortner}
\mJ_h^{BO}(v_h) &= \sum_{T\in\Th}\int_T f(\grad v_h + R(v_h),v_h,x) \dx[x] \\
&\quad+ \sum_{e\in\EI}\int_e \ge h_e^{1-p}|[v_h]|^p\dx[S] + \sum_{e\in\EB}\int_e \ge h_e^{1-p}|v_h-g|^p\dx[S] \nonumber.
\end{align}
The bilinear form induced from this energy functional for the the Poisson problem is
\begin{align*}
a_h^{BO}(u_h,v_h) &= \sum_{T\in\Th}\int_T \grad u_h\cdot\grad v_h\dx[x] + \int_\W R(u_h)\cdot R(v_h) \dx[x] \\
&\quad - \sum_{e\in\EI}\int_e [u_h]\{\grad v_h\dv\} \dx[S] - \sum_{e\in\EI}\int_e [v_h]\{\grad u_h\dv\}\dx[S] \\
&\quad+ \sum_{e\in\EI}\int_e \frac{2\ge}{h_e}[u_h][v_h]\dx[S] + \sum_{e\in\EB}\int_e \frac{2\ge}{h_e} u_hv_h\dx[S],
\end{align*}
which is continuous and coercive on $V_h$ for sufficiently large $\ge>0$.  Moreover, 
$a_h^{BO}(\cdot,\cdot)$ is consistent to the PDE problem since 
\begin{align*}
\int_\W R(u)\cdot R(v_h) \dx[x] = \sum_{e\in\EI}\int_e [u]\{\grad R(v_h)\dv\}\dx[S] = 0
\qquad \forall v_h\in V_h,
\end{align*}
which contributes to the convergence of the method for the Poisson problem. 

Furthermore, it was proved in \cite{Buffa2009} that the lifting operator ensures compactness 
of the discrete minimizers $u_h$.  Since the minimizer of $\mJ_h^{BO}$ is sought in $V_h,$ 
which is not a subspace of $W^{1,p}(\W)$, the reflexive property of $W^{1,p}(\W)$ cannot be used 
to obtain a weakly convergent subsequence. However, $V_h$ is a subset of $\BV(\W)$, the space of 
functions with bounded variations, which does have a compactness property in the weak$\ast$ topology.  
This compactness alone only shows that a subsequence $u_{h_j}$ converges to a $u\in BV(\W)$,
but Buffa and Ortner were able to prove a stronger result: if the sequence of discrete minimizers 
$u_h$ is bounded in $W^{1,p}(\Th)$, then a subsequence $u_{h_j}$ converges to $u\in W^{1,p}(\W)$.  
Moreover, there holds the weak convergence 
\begin{align*}
	\grad^* u_{h_j}+R(u_{h_j})\wto \grad^* u \text{ in } L^p(\Ome) \quad\mbox{as } h\to 0,
\end{align*}
where $\grad^* u_{h_j}$ denotes the piecewise gradient of $u_{h_j}$.  This compactness requires the lifting operator to be present in the discretization in order to pass the week limit and prove convergence of the method.   

\section{The DG-FE numerical derivatives and the discontinuous Ritz framework} \label{dgcalc:formulation}

\subsection{The DG-FE numerical derivatives}\label{dgcalc:subsect:dgderiv}

To define the DG-FE numerical derivatives, we first introduce some notation used in \cite{Feng2013}.  Let $i=1,\ldots,d$.  We define the following trace operators $\mQ_i^+,\mQ_i^-,\mQ_i$ on every $e\in\EI$:
\begin{align}\label{dgcalc:eqn:traceoper}
\mQ_i^\pm(v) = \{ v \} \pm \frac{1}{2}\sgn(\nu_e^i)[v], \qquad 
\mQ_i(v) = \frac{1}{2}\left(\mQ_i^+(v)+\mQ_i^-(v)\right), 
\end{align}
where $\nu_e^i$ denotes the $i$\textsuperscript{th} component of the normal vector $\nu_e$ to $e\in\Eh$, and 
\[
	\sgn(\xi) = \begin{cases} 1 &\text{ if } \xi\geq 0, \\ -1 &\text{ if } \xi<0. \end{cases}
\]
For $e\in\EB$, we define $\mQ_i^+v=\mQ_i^-v=\mQ_iv=v$.  
Using these trace operators, three numerical partial derivative operators corresponding the left, right, and central traces of $v$ were defined in \cite{Feng2013} as follows.
\begin{defn}\label{dgcalc:defn:dgderiv}
Let $v\in W^{1,p}(\Th)$ and $i=1,\ldots,d$.  Define the numerical partial derivative operators in the $x_i$ coordinate $\partial_{h,x_i}^+,\partial_{h,x_i}^-,\partial_{h,x_i}:W^{1,p}(\Th)\to V_h$ by
\begin{align}\label{dgcalc:eqn:numerderiv}
\int_\W \partial_{h,x_i}^\pm(v) \phi_h \dx[x] &= \sum_{e\in\Eh}\int_e \mQ_i^\pm(v)\nu_e^i[\phi_h]\dx[S] - \sum_{T\in\Th} \int_T v\,\partial_{x_i}\phi_h\dx[x] \quad\forall \phi_h\in V_h, \\
\partial_{h,x_i}(v) &= \frac{1}{2}\left(\partial_{h,x_i}^+(v) + \partial_{h,x_i}^-(v)\right).
\end{align}
\end{defn}
We call $\partial_{h,x_i}(v)$ the central numerical partial derivative in the $x_i$ coordinate.  The motivation for these numerical derivatives is to require the standard integration by parts formula to hold when tested against any discrete function $\phi_h\in V_h$.  This allows many of the properties of the
classical derivatives to hold for the numerical derivatives; among them are the product rule, chain rule, and integration by parts (cf. \cite{Feng2013}).  Because of this, a discrete energy built using the DG-FE derivatives should be consistent. In addition, the discrete gradient operators $\grad_h^+,\grad_h^-,\grad_h:W^{1,p}(\Th)\to [V_h]^d$ were also naturally defined 
in \cite{Feng2013} by 
\begin{align}\label{dgcalc:eqn:numergrad}
 \grad_h^\pm v  &= [\partial_{h,x_1}^\pm(v),\partial_{h,x_2}^\pm(v),\ldots,\partial_{h,x_d}^\pm(v)], \\
 \grad_h v &= [\partial_{h,x_1}(v),\partial_{h,x_2}(v),\ldots,\partial_{h,x_d}(v)]. \label{dgcalc:eqn:numergrad1}
\end{align}

We describe two convergent methods which were developed in \cite{Feng2013} with the help of the DG-FE derivatives.  Both methods were formulated for problem \eqref{dgcalc:eqn:poiprob}.
To introduce these methods, we first define a jump operator $j_h:W^{1,p}(\Th)\to V_h$ as follows:
\begin{align*}
\sum_{T\in\Th}\int_T j_h(v)\phi_h \dx[x] = \sum_{e\in\EI}\int_e \frac{\ge}{h_e}[v][\phi_h]\dx[S] + \sum_{e\in\EB}\int_e \frac{\ge}{h_e}v\phi_h\dx[S]\qquad\forall \phi_h\in V_h.
\end{align*}

The first method seeks a function $u_h\in V_h$ such that 
\begin{align}
\int_\W \grad_h u_h \cdot \grad_h \phi_h \dx[x] - \sum_{e\in\EB}\int_e \grad_h u_h\cdot \nu_e\phi_h\dx[S] + \int_\W j_h(u_h)\phi_h \dx[x] = \int_\W f\phi_h  \dx[x]
\end{align}
for all $\phi_h\in V_h$.  This method is equivalent to the well-known local DG method for the model problem \cite{Cockburn1998} and converges provided $\ge>0$.  

The second method, the symmetric dual-wind discontinuous Galerkin (DWDG) method \cite{Lewis2014}, is constructed from the ground up using the DG-FE gradients. The DWDG method seeks $u_h\in V_h$ such that  
\begin{align}
\frac{1}{2}\int_\W \left(\grad_h^+ u_h \cdot \grad_h^+ \phi_h + \grad_h^- u_h \cdot \grad_h^- \phi_h \right)\dx[x] + \int_\W j_h(u_h)\phi_h \dx[x] = \int_\W f\phi_h  \dx[x]
\end{align}
for all $\phi_h\in V_h$.  Note that the sided gradients $\grad_h^+$ and $\grad_h^-$, instead of the central gradient, are used in the formulation.  If $\ge>0$, then the method was proved to be well-posed and convergent. Moreover, if $\Th$ is quasi-uniform and if each element $T\in\Th$ has at most one boundary edge,
then the method is well-posed and converges provided $\ge>-C_*$ for some $h$-independent constant $C_*>0$.  Thus one could set $\ge\equiv 0$, that is, ignoring the penalty terms, and still achieve convergence.  

We also note that besides their applications in solving PDEs, a complete DG-FE numerical calculus was developed in \cite{Feng2013}, which is of independent interests as it provides an alternative approach 
for computing weak (and distributional) derivatives of non-smooth functions.  A Matlab Toolbox 
was recently developed in \cite{Schnake2017,DGCalccode} for implementation of this DG-FE numerical calculus 
in one and two dimensions. The toolbox provides a convenient software package for both teaching and 
research related to numerical derivatives. 

\subsection{Formulation of the discontinuous Ritz method}\label{dgcalc:subsect:dritzform}

With the DG-FE gradients in hand, we are ready to introduce our discontinuous Ritz (DR) method.
 
\begin{defn}\label{dgcalc:defnritz}
The discontinuous Ritz method for problem \eqref{dgcalc:eqn:poiprob} is defined by 
seeking $u_h\in V_h$ such that 
\begin{align} \label{dgcalc:eqn:ritzmini}
	u_h\in \argmin_{v_h\in V_h}\mJ_h(v_h),
\end{align}
where
\begin{align}\label{dgcalc:eqn:ritzenergy}
 \mJ_h(v) &= \int_\W f(\grad_h v,v,x)\dx[x] + \sum_{e\in\EI}\int_e \ge h_e^{1-p}|[v]|^p\dx[S] \\
 &\qquad+ \sum_{e\in\EB}\int_e \ge h_e^{1-p}|v-g|^p\dx[S], \nonumber
\end{align}
where $\nab_h$ is defined by \eqref{dgcalc:eqn:numergrad1}.
\end{defn}

To compute the numerical derivative $\partial_{h,x_i}v$, we note that the mass matrix induced by the left-hand side of \eqref{dgcalc:eqn:numerderiv} is actually a block diagonal matrix which means the computation of the derivatives can be done locally and in parallel. Moreover, when determining the DG-FE partial derivatives of a discrete function, the linearity of $\partial_{h,x_i}^\pm$ and $\partial_{h,x_i}$ allows the action of taking the DG-FE partial derivatives to be written as a matrix which can be computed off-line 
(cf. \cite{DGCalccode}).  

\section{Convergence analysis of the discontinuous Ritz method}\label{dgcalc:analysis}

Clearly, the definition of our DR method is quite simple, we simply replace the 
differential gradient operator $\nab$ by the DG-FE (central) numerical gradient $\nab_h$ in the 
the energy functional $\mJ$ to obtain our discrete energy functional $\mJ_h$.  On the other hand,
the convergence analysis of the proposed DR method is much less straightforward.  It is not clear
at the first look why the method would work.  
The goal of this section is to show the convergence.  This will be done indirectly by 
showing that the proposed DR method as defined in Definition \ref{dgcalc:defnritz} is actually 
equivalent to the variational DGFEM developed by Buffa and Ortner in \cite{Buffa2009}. Specifically,
we shall prove $\mJ_h \equiv \mJ_h^{BO}$ on $V_h$, thus giving equivalence of these two methods 
when minimizing over $V_h$, the equivalence allows us to borrow many technical results from \cite{Buffa2009}. We also present conditions to give the equivalence of $\|\grad_h v_h\|$  
and $|v_h|_{W^{1,p}(\Th)}$ as well as a compactness result for the DG-FE derivatives.  

First, we show the equivalence of $\mJ_h^{BO}$ and $\mJ_h$ on $V_h$.

\begin{lemma}\label{dgcalc:lemequiv:1}
Let $\mJ_h^{BO}$ and $\mJ_h$ be defined by \eqref{dgcalc:eqn:buffaortner} and \eqref{dgcalc:eqn:ritzenergy} respectively, then for any $v_h\in V_h$ we have $\mJ_h(v_h)=\mJ_h^{BO}(v_h)$.  
\end{lemma}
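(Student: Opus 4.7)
The two energies have identical penalty contributions, so the claim reduces to the pointwise identity
\begin{equation*}
\nab_h v_h = \nab v_h + R(v_h) \qquad\text{on each } T\in\Th, \text{ for every } v_h\in V_h,
\end{equation*}
where $\nab v_h$ stands for the (piecewise) classical gradient. Indeed, once this is established, we substitute into \eqref{dgcalc:eqn:ritzenergy} and compare termwise with \eqref{dgcalc:eqn:buffaortner} to conclude $\mJ_h(v_h)=\mJ_h^{BO}(v_h)$.

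To prove the identity, I will start from Definition \ref{dgcalc:defn:dgderiv} and apply element-wise integration by parts to the bulk term:
\begin{equation*}
-\sum_{T\in\Th}\int_T v_h\, \partial_{x_i}\phi_h\dx[x]
= \sum_{T\in\Th}\int_T \partial_{x_i} v_h\, \phi_h\dx[x]
 - \sum_{T\in\Th}\int_{\partial T} v_h\,\phi_h\,(n_T)_i\dx[S].
\end{equation*}
Then I will regroup the element-boundary sum over edges. On an interior edge $e=\partial T^+\cap\partial T^-$ with $n_{T^\pm}=\pm\nu_e$, the algebraic identity $v^+\phi_h^+-v^-\phi_h^- = \{v_h\}[\phi_h]+[v_h]\{\phi_h\}$ yields
\begin{equation*}
\sum_{T\in\Th}\int_{\partial T} v_h\,\phi_h\,(n_T)_i\dx[S]
= \sum_{e\in\EI}\int_e\!\bigl(\{v_h\}[\phi_h]+[v_h]\{\phi_h\}\bigr)\nu_e^i\dx[S]
 + \sum_{e\in\EB}\int_e v_h\,\phi_h\,\nu_e^i\dx[S].
\end{equation*}

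Next I will average the two sided definitions in \eqref{dgcalc:eqn:numerderiv} and use the fact that
\begin{equation*}
\mQ_i(v_h)=\tfrac12\bigl(\mQ_i^+(v_h)+\mQ_i^-(v_h)\bigr)=\{v_h\}
\end{equation*}
on interior edges (the sign terms cancel), while $\mQ_i(v_h)=v_h$ on boundary edges. Substituting this into \eqref{dgcalc:eqn:numerderiv} and combining with the integration-by-parts computation above, the boundary-edge contributions cancel outright and the interior $\{v_h\}[\phi_h]\nu_e^i$ terms cancel, leaving
\begin{equation*}
\int_\W \partial_{h,x_i}(v_h)\,\phi_h\dx[x]
= \sum_{T\in\Th}\int_T \partial_{x_i} v_h\,\phi_h\dx[x]
 - \sum_{e\in\EI}\int_e [v_h]\{\phi_h\}\nu_e^i\dx[S]
\end{equation*}
for every $\phi_h\in V_h$. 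The last sum is precisely $\int_\W R(v_h)_i\,\phi_h\dx[x]$ by applying \eqref{dgcalc-eqn-lifting} componentwise. Since $\partial_{x_i}v_h+R(v_h)_i\in V_h$, the arbitrariness of $\phi_h$ forces $\partial_{h,x_i}(v_h)=\partial_{x_i} v_h + R(v_h)_i$, giving the desired gradient identity after assembling components via \eqref{dgcalc:eqn:numergrad1}. Plugging into $\mJ_h$ turns its $f$-term into exactly the $f(\nab v_h+R(v_h),v_h,x)$ integrand of $\mJ_h^{BO}$, and matching penalty terms finishes the proof.

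The only subtle point is the careful bookkeeping with the normal orientation $\nu_e$, the signs in $\mQ_i^{\pm}$, and the boundary convention $[v]=\{v\}=v^+$; once these are handled consistently so that $\mQ_i$ reduces to $\{\cdot\}$, the cancellations are automatic and the identification with the Buffa--Ortner lifting falls out. No structural assumption on $f$ is used, so the equivalence is purely algebraic on $V_h$.
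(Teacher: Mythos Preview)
Your proposal is correct and follows essentially the same route as the paper's proof: both reduce the claim to the identity $\nab_h v_h = \nab v_h + R(v_h)$, establish the weak form $\int_\W \nab_h v_h\cdot\phi_h = \int_\W (\nab v_h + R(v_h))\cdot\phi_h$ for all $\phi_h\in[V_h]^d$ via the DG integration-by-parts identity, and then conclude by observing that both sides lie in $[V_h]^d$. The only cosmetic difference is that the paper invokes the vector-valued DG integration-by-parts formula \eqref{eqn2:8} directly, whereas you derive it componentwise from scratch; the substance is identical.
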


\begin{proof}
Let $v_h\in V_h$, if we can show that $\grad_h v_h = \grad v_h + R(v_h)$, where $\grad v_h$ is the piecewise gradient, then the equivalence of the two methods follows.  This property was already proved in Proposition 4.2 of \cite{Feng2013}, but below we include the whole proof for completeness.  

We first state the DG integration by parts formula:
\begin{align} \label{eqn2:8}
\sum_{T\in\Th}\int_T\tau\cdot\grad v \dx[x]&= -\sum_{T\in\Th}\int_{T}v\div\tau \dx[x] \\ \nonumber
&\quad+\sum_{e\in\EI}\int_e [\tau\dv]\{v\} \dx[S] +\sum_{e\in\E}\int_e\{\tau\dv\}[v]\dx[S],
\end{align}
which holds for any $v\in W^{1,p}(\Th)$ and $\tau \in [W^{1,p}(\Th)]^d$.

For any $v\in W^{1,p}(\Th)$, by the definition of $\grad_h v_h$ and \eqref{eqn2:8}, we have
\begin{align} \label{dgcalc:eqn:derivtolift}
\int_\W \grad_h v\cdot\phi_h &= \sum_{e\in\Eh}\int_e \{v\}[\phi_h\dv]\dx[S] - \sum_{T\in\Th}\int_T v\div\phi_h\dx[x] \\
&= -\sum_{e\in\EI}\int_e [v]\{\phi_h\dv\}\dx[S] + \sum_{T\in\Th}\int_T \grad v\cdot\phi_h\dx[x] \nonumber\\
&= \sum_{T\in\Th}\int_T (\grad v+R(v))\cdot\phi_h\dx[x] \nonumber\\
&= \int_\W (\grad v+R(v))\cdot\phi_h\dx[x].\nonumber \qquad\forall \phi_h\in[V_h]^d.
\end{align}
Thus we have
\begin{align*}
\int_{\W} \big(\grad_h v_h - (\grad v_h+R(v_h))\big)\cdot\phi_h \dx[x] = 0 \qquad\forall \phi_h\in[V_h]^d,
\end{align*}
by \eqref{dgcalc:eqn:derivtolift}.  Since $\grad_h v_h, \grad v_h, R(v_h)\in [V_h]^d$, setting $\phi_h = \grad_h v_h - (\grad v_h+R(v_h))$ we obtain $\grad_h v_h = \grad v_h+R(v_h)$ in $\W$.  Thus $\mJ_h(v_h)=\mJ_h^{BO}(v_h)$.  The proof is complete.  
\end{proof}

With the equivalence we can borrow and take advantage of the convergence result from Theorem 6.1 of \cite{Buffa2009}. 
\begin{theorem}
For $h>0$, let $u_h\in V_h$ satisfy \eqref{dgcalc:eqn:ritzmini}.  Then there exists a sequence $h_j\searrow 0$ and a function $u\in W_g^{1,p}(\W)$ such that the following hold:
\begin{subequations}
\begin{align}
	&u_{h_j} \to u  \text{ in } L^q(\W) \quad\forall q<p^* \nonumber, \\
	&\grad_{h_j}u_{h_j} \wto \grad u \text{ in } [L^p(\W)]^d \nonumber, \\
	&\mJ_{h_j}(u_{h_j}) \to \mJ(u), \nonumber \\
	&\sum_{e\in\EB}\int_e h_e^{1-p}|u_{h_j}-g|^p\dx[S] + \sum_{e\in\EI}\int_e h_e^{1-p}|[u_{h_j}]|^p\dx[S] \to 0 \nonumber
\end{align}
\end{subequations}
as $j\to\infty$.  Moreover, any accumulation point of the set $\{u_h\}_{h>0}$ is a minimizer of $\mJ$ over $W_g^{1,p}(\W)$.  If $\xi\to f(\xi,v,x)$ is strictly convex for all $(v,x)\in\R\times\W$, then we have
\[
	\|u - u_{h_j}\|_{W^{1,p}(\Th)} \to 0 \qquad\mbox{as } j\to \infty.
\]
If the minimizer $u$ is unique, then the whole sequence $\{u_h\}_{h>0}$ converges.  
\end{theorem}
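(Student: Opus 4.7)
The plan is to reduce the entire statement to Theorem 6.1 of \cite{Buffa2009} via the equivalence $\mJ_h\equiv\mJ_h^{BO}$ on $V_h$ established in Lemma \ref{dgcalc:lemequiv:1}. Since any minimizer $u_h$ of $\mJ_h$ over $V_h$ is also a minimizer of $\mJ_h^{BO}$ over $V_h$, and since the identity $\grad_h v_h = \grad v_h + R(v_h)$ proved inside Lemma \ref{dgcalc:lemequiv:1} translates every Buffa-Ortner conclusion phrased in $\grad v_h+R(v_h)$ into the corresponding statement for $\grad_h v_h$, the bulk of the theorem will follow by direct invocation. The role of the present proof is therefore to assemble the pieces and verify the hypotheses of that theorem.

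The first step is an a priori bound $\|u_h\|_{W^{1,p}(\Th)}\le C$ independent of $h$. I would pick any fixed competitor $v_h\in V_h$ — for instance the Lagrange (or $L^2$-) interpolant of some element of $W_g^{1,p}(\W)\cap C(\overline{\W})$ — and use the upper bound in assumption~(3) together with standard interpolation estimates to get $\mJ_h(u_h)\le\mJ_h(v_h)\le C$. Applying the lower bound in~(3), incorporating the penalty contributions built into $\mJ_h$, and using a discrete Poincar\'e inequality on $W^{1,p}(\Th)$ would then yield the uniform bound; the term $-\a_0|u_h|^r$ in the lower bound can be absorbed since $r<p$.

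Next, the Buffa-Ortner compactness theorem produces a subsequence $h_j\searrow 0$ and a function $u\in W_g^{1,p}(\W)$ with $u_{h_j}\to u$ strongly in $L^q(\W)$ for every $q<p^*$ and $\grad u_{h_j}+R(u_{h_j})\wto\grad u$ weakly in $L^p(\W)^d$; by Lemma \ref{dgcalc:lemequiv:1} the latter is precisely $\grad_{h_j}u_{h_j}\wto\grad u$. The vanishing of the jump and boundary penalty sums follows directly from the uniform bound on $\mJ_{h_j}(u_{h_j})$, since these sums appear explicitly as nonnegative summands in $\mJ_h$ and their scaling in $h_e$ forces them to zero once the other contributions are controlled. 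To identify $u$ as a minimizer and establish $\mJ_{h_j}(u_{h_j})\to\mJ(u)$, I would combine weak lower semicontinuity (Ioffe's theorem applied to the Carath\'eodory-plus-$\xi$-convex integrand $f$ against the weakly convergent pair $(\grad_{h_j}u_{h_j},u_{h_j})$) with a recovery-sequence upper bound: for any competitor $w\in W_g^{1,p}(\W)$, smoothing plus Lagrange interpolation produces $w_{h_j}\in V_h$ with $\mJ_{h_j}(w_{h_j})\to\mJ(w)$, whence $\limsup_j\mJ_{h_j}(u_{h_j})\le\mJ(w)$ for every such $w$.

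Under strict convexity of $\xi\mapsto f(\xi,v,x)$, the equality $\mJ(u)=\lim_j\mJ_{h_j}(u_{h_j})$, combined with the uniform convexity structure of the integrand, upgrades the weak convergence of $\grad_{h_j}u_{h_j}$ to strong convergence in $L^p$; together with the vanishing penalty terms this delivers $\|u-u_{h_j}\|_{W^{1,p}(\Th)}\to 0$. Uniqueness of $u$ then promotes subsequential convergence of $\{u_h\}$ to convergence of the whole family. The main technical hurdle is the lower semicontinuity step: one must ensure that the weak $L^p$ limit of the mesh-dependent discrete gradients $\grad_{h_j}u_{h_j}$ is genuinely $\grad u$, not merely some distributional object contaminated by the jumps. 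This is exactly the non-trivial content supplied by the lifting operator $R$ and the identity $\grad_h v_h=\grad v_h+R(v_h)$ from Lemma \ref{dgcalc:lemequiv:1}, which is why reducing to the Buffa-Ortner framework is both possible and essentially the whole game.
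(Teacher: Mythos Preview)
Your proposal is correct and matches the paper's approach exactly: the paper gives no proof of this theorem at all, but simply states it immediately after Lemma \ref{dgcalc:lemequiv:1} with the sentence ``With the equivalence we can borrow and take advantage of the convergence result from Theorem 6.1 of \cite{Buffa2009}.'' Your reduction via $\mJ_h\equiv\mJ_h^{BO}$ on $V_h$ and the identity $\grad_h v_h=\grad v_h+R(v_h)$ is precisely that borrowing; the additional sketch you supply of the a~priori bound, compactness, $\Gamma$-liminf, and recovery-sequence steps is a reconstruction of the Buffa--Ortner argument itself, which the paper is content to cite as a black box.
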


The following results will be quite useful in later use of the DF-FE derivatives.  First, we sate 
conditions to guarantee equivalence of the semi-norms $\|\grad_h \cdot \|$ and $|\cdot|_{W^{1,p}(\W)}$ 
on $V_h$. To this end, we need to quote a discrete inf-sup condition from Buffa and Ortner \cite{Buffa2009}.  

\begin{lemma}[Lemma A.2 of \cite{Buffa2009}] \label{dgcalc:infsuplem}
Let $1\leq p <\infty$ and $q$ be its H\"older conjugate.  Then there exists a constant $C>0$ independent of $h$ such that
\begin{align}\label{dgcalc:eqn:infsup}
\inf_{v_h\in V_h}\sup_{\phi_h\in V_h}\frac{\int_\W v_h\phi_h}{\|v_h\|_{L^p(\W)}\|\phi_h\|_{L^q(\W)}}\geq C.
\end{align}
\end{lemma}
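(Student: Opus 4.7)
The plan is to prove the discrete inf-sup condition by explicit construction of a near-maximizer in $V_h$, mimicking the classical duality pairing between $L^p$ and $L^q$. For a fixed $v_h\in V_h$, the natural continuous-level candidate is $\psi := |v_h|^{p-1}\sgn(v_h)$, because $\int_\W v_h\psi = \|v_h\|_{L^p(\W)}^p$ and $\|\psi\|_{L^q(\W)}=\|v_h\|_{L^p(\W)}^{p-1}$, which would give pairing ratio exactly equal to $1$. The trouble is that $\psi\notin V_h$ in general, so we must replace it by a discrete surrogate without losing too much.

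My choice of surrogate is $\phi_h := \Pi_h \psi$, where $\Pi_h : L^2(\W)\to V_h$ is the elementwise $L^2$-projection. The key exact identity comes from the fact that $v_h|_T\in\mathbb{P}_k(T)$: by definition of $\Pi_h$,
\[
\int_T \phi_h\, v_h \dx[x] \;=\; \int_T \psi\, v_h \dx[x] \;=\; \int_T |v_h|^p \dx[x]\qquad\forall T\in\Th,
\]
so after summation $\int_\W v_h\phi_h \dx[x] = \|v_h\|_{L^p(\W)}^p$. Thus the numerator in \eqref{dgcalc:eqn:infsup} is recovered exactly, and the entire issue reduces to controlling $\|\phi_h\|_{L^q(\W)}$ by $\|\psi\|_{L^q(\W)}=\|v_h\|_{L^p(\W)}^{p-1}$.

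The heart of the argument, and the only place one must be careful, is proving an $L^q$-stability bound
\[
\|\Pi_h \psi\|_{L^q(T)} \;\leq\; C\, \|\psi\|_{L^q(T)}\qquad\forall T\in\Th,
\]
with $C$ independent of $h$. The approach is a standard scaling argument: on the reference element $\hat T$, the $L^2$-projection onto $\mathbb{P}_k(\hat T)$ is a linear map between finite-dimensional spaces, hence bounded in every $L^q$-norm, with some absolute constant $C_{\hat T,k,q}$. Transporting the estimate to $T$ by the affine map $F_T:\hat T\to T$, the Jacobian factors $|T|/|\hat T|$ appear identically on both sides of the resulting bound, so shape-regularity of $\Th$ delivers an $h$-independent constant. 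Raising the local bound to the $q$-th power and summing over $T$ gives $\|\phi_h\|_{L^q(\W)}\leq C\|v_h\|_{L^p(\W)}^{p-1}$.

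Assembling, we obtain
\[
\sup_{\phi_h\in V_h}\frac{\int_\W v_h\phi_h\dx[x]}{\|v_h\|_{L^p(\W)}\|\phi_h\|_{L^q(\W)}} \;\geq\; \frac{\|v_h\|_{L^p(\W)}^p}{\|v_h\|_{L^p(\W)}\cdot C\|v_h\|_{L^p(\W)}^{p-1}} \;=\; \frac{1}{C},
\]
which is independent of $v_h$ and of $h$, yielding \eqref{dgcalc:eqn:infsup}. The main obstacle is genuinely just the uniform $L^q$-stability of the elementwise $L^2$-projection; everything else is an algebraic identity. One delicate case is $p=1$, $q=\infty$: there the scaling argument still works because polynomials on $\hat T$ have equivalent $L^1$ and $L^\infty$ norms, but the surrogate $\psi=\sgn(v_h)$ is only bounded (not polynomial), so one should verify that the $L^2$-projection of a bounded function into $\mathbb{P}_k(T)$ remains bounded with an $h$-independent constant—which again reduces to norm-equivalence on the reference element.
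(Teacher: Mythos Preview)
The paper does not actually prove this lemma; it is stated as a quotation of Lemma~A.2 from Buffa and Ortner \cite{Buffa2009} and used as a black box. So there is no ``paper's own proof'' to compare against. That said, your argument is precisely the standard one (and is essentially the proof given in \cite{Buffa2009}): take the duality element $\psi=|v_h|^{p-2}v_h$, project it elementwise onto $V_h$, use that $v_h$ itself is a legitimate test function for the projection to recover the numerator exactly, and then invoke $L^q$-stability of the local $L^2$-projection via scaling. The assembly is correct.

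One small wording issue: you write that on the reference element the $L^2$-projection is ``a linear map between finite-dimensional spaces, hence bounded in every $L^q$-norm.'' The domain $L^2(\hat T)$ is not finite-dimensional, so this justification is not quite right as stated. The correct argument is that $\hat\Pi f=\sum_i (f,p_i)_{L^2(\hat T)}\,p_i$ for an $L^2$-orthonormal basis $\{p_i\}$ of $\mathbb{P}_k(\hat T)$, and each coefficient functional $f\mapsto (f,p_i)_{L^2(\hat T)}$ is bounded on $L^q(\hat T)$ because $p_i\in L^{q'}(\hat T)$; hence $\|\hat\Pi f\|_{L^q(\hat T)}\leq C_{\hat T,k,q}\|f\|_{L^q(\hat T)}$. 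With that fix, the scaling argument and the treatment of the borderline case $p=1$, $q=\infty$ are fine.
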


We first show that $\|\grad_h v  \|_{L^p(\Th)}$ can be controlled
 by $|v|_{W^{1,p}(\Th)}$ on $W^{1,p}(\Th)$.  

\begin{lemma} \label{dgcalc:lem:normequiv1}
Let $1<p<\infty$.   Then there exists a constant $C>0$ independent of $h$ such that
\begin{align}\label{dgcalc:eqn:dgderivbounded}
\|\grad_h v \|_{L^p(\Th)} \lss |v|_{W^{1,p}(\Th)}\qquad\forall v\in W^{1,p}(\Th),
\end{align}
\end{lemma}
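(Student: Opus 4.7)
The plan is to reduce the estimate to a duality argument based on the discrete inf-sup condition of Lemma \ref{dgcalc:infsuplem}, applied componentwise to $\grad_h v \in [V_h]^d$. First I would note that the identity \eqref{dgcalc:eqn:derivtolift} in the proof of Lemma \ref{dgcalc:lemequiv:1} was in fact established for arbitrary $v \in W^{1,p}(\Th)$ (the hypothesis $v \in V_h$ was used only at the very last step to conclude the pointwise identity $\grad_h v = \grad v + R(v)$). Hence for every $v \in W^{1,p}(\Th)$ and every $\phi_h \in [V_h]^d$ we have
\begin{align*}
\int_\W \grad_h v \cdot \phi_h \dx[x]
= \sum_{T\in\Th}\int_T \grad v \cdot \phi_h \dx[x] - \sum_{e\in\EI}\int_e [v]\{\phi_h\dv\}\dx[S].
\end{align*}

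Second, by Lemma \ref{dgcalc:infsuplem} applied in each component (with $q$ the H\"older conjugate of $p$), I obtain
\begin{align*}
\|\grad_h v\|_{L^p(\Th)} \lss \sup_{\phi_h\in [V_h]^d}
\frac{\bigl|\int_\W \grad_h v \cdot \phi_h \dx[x]\bigr|}{\|\phi_h\|_{L^q(\W)}}.
\end{align*}
For the volume term I apply H\"older directly to get $\|\grad v\|_{L^p(\Th)}\|\phi_h\|_{L^q(\W)}$. For the edge term I use H\"older with the weights $\ge^{1/p} h_e^{(1-p)/p}$ and its reciprocal $\ge^{-1/p} h_e^{(p-1)/p}$, which gives
\begin{align*}
\Bigl|\sum_{e\in\EI}\int_e [v]\{\phi_h\dv\}\dx[S]\Bigr|
\leq \Bigl(\sum_{e\in\EI}\int_e \ge h_e^{1-p}|[v]|^p \dx[S]\Bigr)^{1/p}
\Bigl(\sum_{e\in\EI}\int_e \ge^{-q/p} h_e^{(p-1)q/p}|\{\phi_h\dv\}|^q \dx[S]\Bigr)^{1/q}.
\end{align*}
Since $q = p/(p-1)$, the exponent on $h_e$ in the second factor collapses to $1$, which is exactly what the discrete trace inequality needs.

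Third, the standard discrete trace inequality on shape regular meshes gives $\|\phi_h\|_{L^q(e)}^q \lss h_e^{-1} \|\phi_h\|_{L^q(T)}^q$ for any $T$ adjacent to $e$, so summing over edges I control the second factor above by $\|\phi_h\|_{L^q(\W)}$ up to a multiplicative constant depending on $\gamma^*$ (or simply on the penalty weights). Combining these bounds, dividing by $\|\phi_h\|_{L^q(\W)}$, and taking the supremum yields $\|\grad_h v\|_{L^p(\Th)} \lss |v|_{W^{1,p}(\Th)}$.

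The main technical obstacle is bookkeeping with the H\"older exponents and the penalty weight $\ge h_e^{1-p}$: one has to split the weight exactly so that the $[v]$-factor appears in the $|\cdot|_{W^{1,p}(\Th)}$-norm while the $\phi_h$-factor is left with the single power of $h_e$ that the inverse trace inequality can absorb. Everything else is routine H\"older and duality.
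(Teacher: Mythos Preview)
Your proof is correct and follows essentially the same route as the paper: both start from the identity \eqref{dgcalc:eqn:derivtolift}, bound the pairing $\int_\W \grad_h v\cdot\phi_h$ by splitting the edge term with the H\"older weights $h_e^{(1-p)/p}$ and $h_e^{1/q}$, invoke the discrete trace inequality to absorb the $\phi_h$ factor, and finish with the inf-sup condition of Lemma~\ref{dgcalc:infsuplem}. The only cosmetic difference is that you carry the penalty weight $\ge$ explicitly through the H\"older splitting, whereas the paper drops it in the intermediate steps and absorbs it into the $\lss$ at the end.
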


\begin{proof}
Let $q$ be the H\"older conjugate of $p$ and let $v\in W^{1,p}(\Th)$ and $\phi_h\in [V_h]^d$.  From \eqref{dgcalc:eqn:derivtolift} we have
\begin{align*}
\int_\W \grad_h v\cdot\phi_h \dx[x] &= -\sum_{e\in\EI}\int_e [v]\{\phi_h\dv\}\dx[S] + \sum_{T\in\Th}\int_T \grad v\cdot\phi_h\dx[x] \\
&\leq \sum_{e\in\EI}\int_e h_e^{\frac{1-p}{p}}|[v]|\cdot h_e^{\frac{1}{q}}|\{\phi_h\dv\}|\dx[S] + \sum_{T\in\Th} \|\grad v\|_{L^p(T)} \|\phi_h\|_{L^q(T)} \\
&\leq \sum_{e\in\EI}\int_e \left(h_e^{1-p}|[v]|^p\right)^\frac{1}{p}\left( h_e|\{\phi_h\dv\}|^q\right)^\frac{1}{q}\dx[S] + \|\grad v\|_{L^p(\W)} \|\phi_h\|_{L^q(\W)} \\
&\leq \bigg(\sum_{e\in\EI}h_e^{1-p}\|[v]\|_{L^p(e)}^p\bigg)^\frac{1}{p}\bigg(\sum_{e\in\EI} h_e\|\{\phi_h\dv\}\|_{L^q(e)}^q\bigg)^\frac{1}{q} \\
&\qquad  + \|\grad v\|_{L^p(\W)} \|\phi_h\|_{L^q(\W)} \\
&\lss \bigg(\sum_{e\in\EI}h_e^{1-p}\|[v]\|_{L^p(e)}^p\bigg)^\frac{1}{p}\|\phi_h\|_{L^q(\W)} +\|\grad v\|_{L^p(\Th)} \|\phi_h\|_{L^q(\W)} \\
&\lss |v|_{W^{1,p}(\Th)}\|\phi_h\|_{L^q(\W)}.
\end{align*}
Since $\grad_h v\in V_h$, it follows from Lemma \ref{dgcalc:infsuplem} that
\begin{align*}
\|\grad_h v\|_{L^p(\Th)} \lss \sup_{\phi_h\in V_h}\frac{\int_\W \grad_h v\cdot\phi_h}{\|\phi_h\|_{L^q(\W)}} \lss |v|_{W^{1,p}(\Th)}.
\end{align*}
which is exactly \eqref{dgcalc:eqn:dgderivbounded}.
\end{proof}

We next show that $|v_h|_{W^{1,p}(\Th)}$ can be controlled by $\|\grad_h v_h \|_{L^p(\Th)}$ on $V_h$ for sufficiently large $\gamma^*$.  

\begin{lemma} \label{dgcalc:lem:normequiv2}
Let $1<p<\infty$.   Then there exists a constant $C,\gamma^*>0$ independent of $h$ such that for every $v_h\in V_h$
\begin{align}\label{dgcalc:eqn:dgderivbounded2}
|v_h|_{W^{1,p}(\Th)} \leq C \|\grad_h v_h \|_{L^p(\Th)} + C\bigg(\sum_{e\in\EI}\ge h_e^{1-p}\|[v_h]\|_{L^p(e)}^p\bigg)^{1/p},
\end{align}
provided that $\ge>\gamma^*$. 
\end{lemma}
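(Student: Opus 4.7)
The plan is to exploit the identity $\grad_h v_h = \grad v_h + R(v_h)$ on $V_h$ established in Lemma \ref{dgcalc:lemequiv:1}, where $\grad v_h$ denotes the piecewise gradient and $R$ is the lifting operator of \eqref{dgcalc-eqn-lifting}. The triangle inequality then yields
\[
\|\grad v_h\|_{L^p(\Th)} \leq \|\grad_h v_h\|_{L^p(\Th)} + \|R(v_h)\|_{L^p(\W)},
\]
so, since $|v_h|_{W^{1,p}(\Th)}$ is the sum of $\|\grad v_h\|_{L^p(\Th)}$ and a $\ge$-weighted jump seminorm, the whole task reduces to controlling $\|R(v_h)\|_{L^p(\W)}$ in terms of the jumps of $v_h$ with a constant independent of $h$ and $\ge$.

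For that step I would mimic the duality argument used in the proof of Lemma \ref{dgcalc:lem:normequiv1}. Testing the defining relation \eqref{dgcalc-eqn-lifting} against an arbitrary $\phi_h \in [V_h]^d$ gives
\[
\int_\W R(v_h)\cdot\phi_h\dx[x] = -\sum_{e\in\EI}\int_e [v_h]\{\phi_h\dv\}\dx[S].
\]
Applying H\"older's inequality on each edge with the pairing $h_e^{(1-p)/p}|[v_h]|$ and $h_e^{1/q}|\{\phi_h\dv\}|$ (with $q$ the H\"older conjugate of $p$), followed by the standard discrete trace inequality $h_e\|\phi_h\|_{L^q(e)}^q \lss \|\phi_h\|_{L^q(T_e)}^q$ and summation over edges, yields
\[
\Bigl|\int_\W R(v_h)\cdot\phi_h\dx[x]\Bigr| \lss \Bigl(\sum_{e\in\EI} h_e^{1-p}\|[v_h]\|_{L^p(e)}^p\Bigr)^{1/p}\|\phi_h\|_{L^q(\W)}.
\]
Since $R(v_h) \in [V_h]^d$, a componentwise application of the scalar inf-sup condition of Lemma \ref{dgcalc:infsuplem} (testing the $i$th component of $R(v_h)$ against $\psi_h e_i$ for $\psi_h \in V_h$) converts this into
\[
\|R(v_h)\|_{L^p(\W)} \leq C_0 \Bigl(\sum_{e\in\EI} h_e^{1-p}\|[v_h]\|_{L^p(e)}^p\Bigr)^{1/p}
\]
for some $C_0$ independent of both $h$ and $\ge$.

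To finish, the jump term produced by the bound on $R(v_h)$ carries no $\ge$ weighting, whereas the one appearing in $|v_h|_{W^{1,p}(\Th)}$ does. Using the hypothesis $\ge > \gamma^*$ on every interior edge, one has
\[
\Bigl(\sum_{e\in\EI} h_e^{1-p}\|[v_h]\|_{L^p(e)}^p\Bigr)^{1/p} \leq (\gamma^*)^{-1/p}\Bigl(\sum_{e\in\EI} \ge h_e^{1-p}\|[v_h]\|_{L^p(e)}^p\Bigr)^{1/p},
\]
and assembling the pieces produces \eqref{dgcalc:eqn:dgderivbounded2} with $C = 1 + C_0(\gamma^*)^{-1/p}$. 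The main subtlety I anticipate is that $C_0$ is intrinsic to the lifting operator and has no $\ge$ dependence, so the final constant degenerates as $\gamma^* \to 0$; the assumption $\ge > \gamma^*$ is precisely the requirement that the penalty parameter stay bounded away from zero so that this absorption step is available.
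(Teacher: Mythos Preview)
Your argument is correct but follows a different route from the paper. The paper works directly with the identity \eqref{dgcalc:eqn:lifting} and tests it against the specific function $\phi_h = \mathcal{P}_h(\grad v_h|\grad v_h|^{p-2})$, where $\mathcal{P}_h$ is the elementwise $L^2$ projection onto $V_h$; this choice makes $\int_\W \grad v_h\cdot\phi_h\,\dx[x] = \|\grad v_h\|_{L^p(\Th)}^p$, and after estimating the remaining two terms via H\"older together with trace and inverse inequalities, dividing through by $\|\grad v_h\|_{L^p(\Th)}^{p-1}$ yields $\|\grad v_h\|_{L^p(\Th)} \leq \|\grad_h v_h\|_{L^p(\Th)} + C_1\bigl(\sum_{e\in\EI} h_e^{1-p}\|[v_h]\|_{L^p(e)}^p\bigr)^{1/p}$, after which $\gamma^* = C_1^p + 1$ is chosen explicitly. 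Your approach instead isolates the lifting operator via $\grad_h v_h = \grad v_h + R(v_h)$, applies the triangle inequality, and bounds $\|R(v_h)\|_{L^p(\W)}$ by the unweighted jump seminorm using the inf-sup condition of Lemma~\ref{dgcalc:infsuplem}; this is precisely the $L^p$ stability of the lifting operator (Lemma~7 of \cite{Buffa2009}, which the paper itself invokes later in the proof of Theorem~\ref{dgcalc:compactness}). The two arguments reach the same intermediate inequality by different means: yours is more modular and reuses an off-the-shelf lifting estimate, while the paper's duality-test argument is self-contained and produces a concrete threshold $\gamma^*$ rather than leaving it as an arbitrary positive lower bound on which $C$ then depends.
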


\begin{proof}
Let $q$ be the H\"older conjugate of $p$ and $v_h\in V_h$.  From \eqref{dgcalc:eqn:derivtolift} we have
\begin{align}\label{dgcalc:eqn:lifting}
	\int_\W \grad_hv_h\cdot\phi_h \dx[x] = -\sum_{e\in\EI}\int_e [v_h] \{\phi_h\dv\} \dx[S] + \int_\W \grad v_h\cdot\phi_h\dx[x]
\end{align}
for every $\phi_h\in [V_h]^d$.  Let $\mathcal{P}_h(\grad v_h|\grad v_h|^{p-2})$ where $\mathcal{P}_h$ 
is the local $L^2$ projection onto $\Th$ defined by
\[
	\int_T \mathcal{P}_h(\grad v_h|\grad v_h|^{p-2})\cdot \phi_h \dx[x] = \int_T \grad v_h|\grad v_h|^{p-2}\cdot\phi_h \dx[x]
\]
for all $\phi_h\in  V_h$ and $T\in\Th$.  Choosing $\phi_h=\mathcal{P}_h(\grad v_h|\grad v_h|^{p-2})$ in \eqref{dgcalc:eqn:lifting} yields 
\begin{align}\label{dgcalc:eqn:equivalenceineq1}
\int_\W \grad_hv_h\cdot \mathcal{P}_h(\grad v_h|\grad v_h|^{p-2})\dx[x] &= -\sum_{e\in\EI}\int_e [v_h] \{\mathcal{P}_h(\grad v_h|\grad v_h|^{p-2})\dv\} \dx[S] \\
&\quad+ \int_\W \grad v_h\cdot\mathcal{P}_h(\grad v_h|\grad v_h|^{p-2})\dx[x]. \nonumber
\end{align}
By the stability of $\mathcal{P}_h$ we obtain
\begin{align}
\int_\W \grad_hv_h\cdot \mathcal{P}_h(\grad v_h|\grad v_h|^{p-2})\dx[x] &\leq \|\grad_h v_h\|_{L^p(\Th)}\|\mathcal{P}_h(\grad v_h|\grad v_h|^{p-2}) \|_{L^q(\Th)} \\
&\leq \|\grad_h v_h\|_{L^p(\Th)}\|\grad v_h|\grad v_h|^{p-2} \|_{L^q(\Th)} \nonumber \\
&\leq \|\grad_h v_h\|_{L^p(\Th)}\|\grad v_h \|_{L^p(\Th)}^{p-1}. \nonumber
\end{align}
By the standard trace and inverse inequalities for DG functions, there exists $C_1>0$ independent of $h$ such that
\begin{align}\label{dgcalc:eqn:equivalenceineq2}
\sum_{e\in\EI}\int_e [v_h] &\{\mathcal{P}_h(\grad v_h|\grad v_h|^{p-2})\dv\} \dx[S] \\
&\leq \bigg(\sum_{e\in\EI} h_e^{1-p}\|[v_h]\|_{L^p(e)}^p \bigg)^\frac1p \bigg(\sum_{e\in\EI}h_e\|\{\mathcal{P}_h(\grad v_h|\grad v_h|^{p-2})\dv\}\|_{L^q(e)}^q \dx[S]\bigg)^\frac1q \nonumber \\
&\leq C_1 \bigg(\sum_{e\in\EI} h_e^{1-p}\|[v_h]\|_{L^p(e)}^p \bigg)^\frac1p \|\mathcal{P}_h(\grad v_h|\grad v_h|^{p-2}) \|_{L^q(\Th)} \nonumber \\
&\leq C_1 \bigg(\sum_{e\in\EI} h_e^{1-p}\|[v_h]\|_{L^p(e)}^p \bigg)^\frac1p \|\grad v_h \|_{L^p(\Th)}^{p-1}. \nonumber
\end{align}
By the properties of $P_h$ we have
\begin{align}\label{dgcalc:eqn:equivalenceineq3}
\int_\W \grad v_h\cdot\mathcal{P}_h(\grad v_h|\grad v_h|^{p-2})\dx[x] = \int_\W \grad v_h\cdot\grad v_h|\grad v_h|^{p-2}\dx[x] = \|\grad v_h \|_{L^p(\Th)}^p.
\end{align}
Thus by \eqref{dgcalc:eqn:equivalenceineq1}-\eqref{dgcalc:eqn:equivalenceineq3} and dividing by $\|\grad v_h\|_{L^p(\Th)}^{p-1}$ we have
\[
\|\grad_h v_h\|_{L^p(\Th)} \geq -C_1	\bigg(\sum_{e\in\EI} h_e^{1-p}\|[v_h]\|_{L^p(e)}^p \bigg)^\frac1p  + \|\grad v_h \|_{L^p(\Th)}.
\]
Choosing $\gamma^*= C_1^p+1$ gives us the desired estimate.  The proof is complete.
\end{proof}

We can also prove a compactness result using the DG-FE numerical derivatives.  For this, we use a discrete compactness result from Buffa and Ortner \cite{Buffa2009}.

\begin{lemma}[Theorem 5.2 and Lemma 8 of \cite{Buffa2009}] \label{dgcalc:BOcompact}
For $1<p<\infty$ and $0<h<1$, let $v^h\in W^{1,p}(\Th)$ such that 
\begin{align}
\sup_{0<h<1}\big(\|v^h\|_{L^1(\W)} + |v^h |_{W^{1,p}(\Th)} \big) <\infty.
\end{align}
Then there exists a sequence $h_j\searrow 0$ and a function $v\in W^{1,p}(\W)$ such that
\begin{subequations}
\begin{alignat}{3}
	v^{h_j} &\to v  &&\text{ in } L^q(\W) &&\quad\forall\, 1\leq q<p^*,  \\
	v^{h_j} &\to v  &&\text{ in } L^q(\partial\W) &&\quad\forall\, 1<q<q^*,  \\
	\grad v^{h_j} + R(v^{h_j}) &\wto \grad v &&\text{ in } [L^p(\W)]^d,&& \label{dgcalc:BOgradconv}
\end{alignat}
\end{subequations}
where $p^*$ is the Sobolev conjugate of $p$ defined in \eqref{dgcalc:eqnsobolevcong} and $q^*$ is defined in \eqref{dgcalc:eqn:edgesobolevcong}.
\end{lemma}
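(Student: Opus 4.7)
The plan is to argue in three stages: obtain strong $L^1$-compactness by exhibiting a uniform bound in $\BV(\W)$, upgrade the convergence to $L^q(\W)$ and $L^q(\partial\W)$ via broken Sobolev and trace inequalities, and finally identify the weak limit of $\grad v^h + R(v^h)$ as the distributional gradient of the limit function.

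First, the broken seminorm $|v^h|_{W^{1,p}(\Th)}$ controls both $\|\grad v^h\|_{L^p(\Th)}$ and the weighted jump term. Standard H\"older bookkeeping on each edge (together with quasi-uniformity) converts the weighted $L^p$-jump bound into a bound on $\sum_{e\in\EI}\int_e|[v^h]|\dx[S]$, which combined with the piecewise $L^1$ control of the gradient yields a uniform bound on $|v^h|_{\BV(\W)}$. Rellich–Kondrachov then produces a subsequence $v^{h_j}\to v$ in $L^1(\W)$ with $v\in\BV(\W)$. A broken Sobolev–Poincar\'e inequality and its trace analogue (standard in the DG literature) give uniform bounds $\|v^h\|_{L^{p^*}(\W)}+\|v^h\|_{L^{q^*}(\partial\W)}\lesssim 1$, and interpolating these with the $L^1$-strong convergence delivers strong convergence in $L^q(\W)$ for all $q<p^*$ and in $L^q(\partial\W)$ for all $1<q<q^*$.

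For the gradient, Lemma \ref{dgcalc:lem:normequiv1} combined with the identity $\grad_h v^h = \grad v^h + R(v^h)$ established inside the proof of Lemma \ref{dgcalc:lemequiv:1} gives a uniform $L^p$-bound on $\grad v^h + R(v^h)$, so reflexivity yields a further subsequence with weak limit $\sigma \in [L^p(\W)]^d$. To identify $\sigma = \grad v$, I would test against arbitrary $\phi \in [C^\infty_c(\W)]^d$ and combine piecewise integration by parts,
$$
\sum_{T\in\Th}\int_T \grad v^h\cdot\phi\,\dx[x] = -\int_\W v^h\,\div\phi\,\dx[x] + \sum_{e\in\EI}\int_e [v^h]\{\phi\cdot\nu_e\}\dx[S],
$$
with the pairing of $R(v^h)$ against the local $L^2$-projection $\mathcal{P}_h\phi\in[V_h]^d$,
$$
\int_\W R(v^h)\cdot\phi\,\dx[x] = -\sum_{e\in\EI}\int_e [v^h]\{\mathcal{P}_h\phi\cdot\nu_e\}\dx[S] + \int_\W R(v^h)\cdot(\phi-\mathcal{P}_h\phi)\,\dx[x].
$$
Adding these, the interior edge contributions cancel up to a residual $\sum_{e\in\EI}\int_e [v^h]\{(\phi-\mathcal{P}_h\phi)\cdot\nu_e\}\dx[S]$ controlled by the weighted jump seminorm times $\|\phi-\mathcal{P}_h\phi\|_{L^q(\Eh)}$ through the standard scaled trace inequality, while the volume residual is controlled by $\|R(v^h)\|_{L^p}\|\phi-\mathcal{P}_h\phi\|_{L^q}$. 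Both residuals vanish as $h\to 0$ by optimal interpolation estimates, so the limit reads $\int_\W \sigma\cdot\phi\,\dx[x] = -\int_\W v\,\div\phi\,\dx[x]$, identifying $\sigma = \grad v$ distributionally and upgrading $v$ from $\BV(\W)$ to $W^{1,p}(\W)$.

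The main obstacle will be the last identification step: the discrete lifting $R(v^h)$ is only defined against $[V_h]^d$-valued test functions, so pairing it with smooth $\phi$ forces the careful projection-error bookkeeping above, leaning on the uniform $L^p$-bound for $R(v^h)$ for the volume residual and on the uniform weighted jump bound for the edge residual. Once this is done, a standard diagonal extraction across the three successive subsequences yields a single subsequence realizing all claimed convergences simultaneously.
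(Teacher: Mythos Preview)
The paper does not supply a proof of this lemma: it is quoted from Buffa and Ortner (Theorem~5.2 and Lemma~8 of \cite{Buffa2009}) as a black box and used as input for Theorem~\ref{dgcalc:compactness}. So there is no ``paper's own proof'' to compare against; your sketch is essentially a reconstruction of the Buffa--Ortner argument, and the three-stage plan ($\BV$ compactness via H\"older control of the $L^1$ jumps, upgrade to $L^q(\W)$ and $L^q(\partial\W)$ by broken Sobolev/trace embeddings plus interpolation, and identification of the weak limit by testing against smooth $\phi$ with projection-error bookkeeping) is the right one and would go through.

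One inaccuracy worth fixing: you invoke ``the identity $\grad_h v^h = \grad v^h + R(v^h)$ established inside the proof of Lemma~\ref{dgcalc:lemequiv:1}'' to get the uniform $L^p$ bound on $\grad v^h + R(v^h)$. That pointwise identity is only proved there for $v_h\in V_h$, where both sides lie in $[V_h]^d$; for general $v^h\in W^{1,p}(\Th)$ (the hypothesis here) the piecewise gradient need not be piecewise polynomial, and \eqref{dgcalc:eqn:derivtolift} only says that $\grad_h v^h$ is the $[V_h]^d$-projection of $\grad v^h + R(v^h)$. This does not damage your argument, since the bound you actually need follows more directly: $\|\grad v^h\|_{L^p(\Th)}\leq |v^h|_{W^{1,p}(\Th)}$ trivially, and the lifting estimate $\|R(v^h)\|_{L^p(\W)}\lesssim \bigl(\sum_{e\in\EI} h_e^{1-p}\|[v^h]\|_{L^p(e)}^p\bigr)^{1/p}$ is Lemma~7 of \cite{Buffa2009} (the paper itself cites it in the proof of Theorem~\ref{dgcalc:compactness}). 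With that substitution, your identification step is clean.
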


We are now ready to state our compactness result, which differs from Lemma \ref{dgcalc:BOcompact} by controlling DG functions using the DG-FE numerical derivatives as well as showing their DG-FE numerical derivatives weakly converge.  

\begin{theorem}\label{dgcalc:compactness}
Let $1<p<\infty$.  There exists $\gamma^*>0$ such that for any $v_h\in V_h$ with
\begin{align}
\sup_{0<h<1}\left(\|v_h\|_{L^p(\partial \W)} + \|\grad_h v_h \|_{L^p(\Th)} +  \bigg(\sum_{e\in\EI}\ge h_e^{1-p}\|[v_h]\|_{L^p(e)}^p\bigg)^{\frac1p}\right) <\infty.
\end{align}
Then there exists a sequence $h_j\searrow 0$ and a function $v\in W^{1,p}(\W)$ such that
\begin{subequations}
\begin{alignat}{3}
	v_{h_j} &\to v  &&\text{ in } L^q(\W) &&\quad\forall\,1\leq q<p^*, \label{dgcalc:eqnconv1} \\
	v_{h_j} &\to v  &&\text{ in } L^q(\partial\W) &&\quad\forall\, 1<q<q^*, \label{dgcalc:eqnconv2} \\
	\grad_{h_j}v_{h_j} &\wto \grad v &&\text{ in } [L^p(\W)]^d,&& \label{dgcalc:eqnconv3} 
\end{alignat}
\end{subequations}
where $p^*$ is the Sobolev conjugate of $p$ defined in \eqref{dgcalc:eqnsobolevcong} and $q^*$ is defined in \eqref{dgcalc:eqn:edgesobolevcong}.
\end{theorem}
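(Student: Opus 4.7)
The plan is to reduce this theorem to Lemma \ref{dgcalc:BOcompact} by combining the identity $\grad_h v_h = \grad v_h + R(v_h)$ on $V_h$ from the proof of Lemma \ref{dgcalc:lemequiv:1} with the seminorm equivalence of Lemma \ref{dgcalc:lem:normequiv2}. The key point is that Lemma \ref{dgcalc:BOcompact} requires control of $\|v_h\|_{L^1(\W)} + |v_h|_{W^{1,p}(\Th)}$, whereas our hypothesis controls $\|v_h\|_{L^p(\partial\W)} + \|\grad_h v_h\|_{L^p(\Th)}$ plus the interior jump seminorm, so the work is to pass between these two sets of quantities.

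First I would choose $\gamma^*$ large enough that Lemma \ref{dgcalc:lem:normequiv2} applies. For $\ge>\gamma^*$ that lemma yields
\[
|v_h|_{W^{1,p}(\Th)} \lesssim \|\grad_h v_h\|_{L^p(\Th)} + \bigg(\sum_{e\in\EI}\ge h_e^{1-p}\|[v_h]\|_{L^p(e)}^p\bigg)^{1/p},
\]
so by hypothesis $|v_h|_{W^{1,p}(\Th)}$ is bounded uniformly in $h$. To obtain the remaining $L^1(\W)$ bound, I would invoke a broken Poincar\'e--Friedrichs inequality of the form $\|v_h\|_{L^p(\W)} \lesssim |v_h|_{W^{1,p}(\Th)} + \|v_h\|_{L^p(\partial\W)}$, which is a standard tool in the DG literature. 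Combining this with the previous step and H\"older's inequality produces a uniform bound on $\|v_h\|_{L^1(\W)}$.

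With the hypothesis of Lemma \ref{dgcalc:BOcompact} verified, extraction of a subsequence yields $h_j\searrow 0$ and $v\in W^{1,p}(\W)$ satisfying the strong convergences \eqref{dgcalc:eqnconv1} and \eqref{dgcalc:eqnconv2} already in the desired form, together with the weak convergence $\grad v_{h_j}+R(v_{h_j})\wto \grad v$ in $[L^p(\W)]^d$. Finally, the proof of Lemma \ref{dgcalc:lemequiv:1} showed $\grad_h w_h = \grad w_h + R(w_h)$ for every $w_h\in V_h$; applying this identity with $w_h=v_{h_j}$ converts the Buffa--Ortner weak convergence directly into \eqref{dgcalc:eqnconv3}.

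The only nontrivial step in the assembly is the broken Poincar\'e--Friedrichs inequality used to bridge from the boundary $L^p$ hypothesis to an interior $L^1$ bound, and this is a standard ingredient in broken Sobolev theory; everything else follows mechanically from the lemmas already proved in the paper.
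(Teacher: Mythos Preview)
Your proposal is correct and the first three steps---invoking Lemma \ref{dgcalc:lem:normequiv2} to bound $|v_h|_{W^{1,p}(\Th)}$, applying a broken Poincar\'e--Friedrichs inequality to control $\|v_h\|_{L^1(\W)}$, and then quoting Lemma \ref{dgcalc:BOcompact}---match the paper's proof exactly (the paper cites Theorem 10.6.12 of \cite{Sp:BS} for the Poincar\'e--Friedrichs step).

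Your treatment of \eqref{dgcalc:eqnconv3} is, however, more economical than the paper's. You simply observe that since $v_{h_j}\in V_{h_j}$, the pointwise identity $\grad_{h_j}v_{h_j}=\grad v_{h_j}+R(v_{h_j})$ from Lemma \ref{dgcalc:lemequiv:1} turns the Buffa--Ortner weak limit \eqref{dgcalc:BOgradconv} directly into \eqref{dgcalc:eqnconv3}. The paper instead tests against $\phi\in[C_c^\infty(\W)]^d$, splits off a discrete approximation $\phi_{h_j}$, and uses uniform $L^p$ bounds on $\grad v_{h_j}$, $R(v_{h_j})$, and $\grad_{h_j}v_{h_j}$ to pass to the limit. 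That detour would be needed if one only knew $\grad_h v=\grad v+R(v)$ weakly against $[V_h]^d$ test functions (the case for general $v\in W^{1,p}(\Th)$ in \eqref{dgcalc:eqn:derivtolift}), but since the theorem is stated for $v_h\in V_h$, the identity holds as an equality of functions and your shortcut is valid.
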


\begin{proof}
From Lemma \ref{dgcalc:lem:normequiv2}, we have
\[
	|v_h|_{W^{1,p}(\Th)} \lss \|\grad_h v_h \|_{L^p(\Th)} +  \bigg(\sum_{e\in\EI}\ge h_e^{1-p}\|[v_h]\|_{L^p(e)}^p\bigg)^{\frac1p}.
\]
which shows that $v_h$ is uniformly bounded in $W^{1,p}(\Th)$.  By the Poincar\`e-Fredrichs inequality, Theorem 10.6.12 of \cite{Sp:BS}, we have 
\[
	\| v_h \|_{L^1(\W)} \lss \| v_h \|_{L^p(\W)} \lss \|v_h\|_{L^p(\partial\W)} + |v_h|_{W^{1,p}(\Th)}.
\]
Therefore, the family $\{v_h\}$ satisfies the hypothesis of Lemma \ref{dgcalc:BOcompact}, which gives us everything in the theorem except for \eqref{dgcalc:eqnconv3}.

To show  \eqref{dgcalc:eqnconv3}, we use the ideas from the proof of Theorem 5.2 of \cite{Buffa2009}. 
 Let $\phi\in [C_c^{\infty}(\W)]^d$, if we can show
\begin{align} \label{dgcalc:eqneweakdef}
\lim_{j\to \infty}\int_\W \grad_{h_j} v_{h_j}\cdot\phi\dx[x] = \int_\W \grad v\cdot\phi \dx[x].
\end{align}
then we are done.  To the end, let $\phi_{h_j}\in [V_{h_j}]^d$, from \eqref{dgcalc:eqn:derivtolift} we have
\begin{align*}
\int_\W \grad_{h_j} v_{h_j}\cdot\phi\dx[x] &= \int_\W \grad_{h_j} v_{h_j}\cdot\phi_{h_j}\dx[x] + \int_\W \grad_{h_j} v_{h_j}\cdot(\phi-\phi_{h_j})\dx[x] \\ 
&=\int_\W \bigl(\grad v_{h_j}+R(v_{h_j}) \bigr)\cdot\phi_{h_j}\dx[x] + \int_\W \grad_{h_j} v_{h_j}\cdot(\phi-\phi_{h_j})\dx[x] \\
&=\int_\W \bigl(\grad v_{h_j}+R(v_{h_j}) \bigr)\cdot\phi\dx[x] \\
&\quad + \int_\W \bigl(\grad v_{h_j}+R(v_{h_j}) \bigr)\cdot(\phi_{h_j}-\phi)\dx[x]  \\
&\quad+ \int_\W \grad_{h_j} v_{h_j}\cdot(\phi-\phi_{h_j})\dx[x].
\end{align*}
Lemma 7 of \cite{Buffa2009} and Lemma \ref{dgcalc:lem:normequiv1} imply the uniform boundedness of $\grad v^{h_j}$, $R(v^{h_j})$, and $\grad_{h_j} v^{h_j}$ in $L^p(\W)$.  Thus, choosing $\phi_{h_j}$ to be the piecewise constant average of $\phi$ on $T\in\mathcal{T}_{h_j}$ forces the rightmost two terms to vanish as $j\to\infty$.  We then obtain \eqref{dgcalc:eqneweakdef} from \eqref{dgcalc:BOgradconv}.  The proof is complete.
\end{proof}

\section{Numerical experiments}\label{dgcalc:examples}

In the section we present some numerical tests to show the effectiveness of the proposed discontinuous
Ritz method.  Our prototypical example is the following $p-$Laplace energy:
\begin{align}\label{dgcalc:eqn:plapenergy}
\mJ^p(v) = \int_\W \Bigl( \frac{1}{p}|\grad v|^p - Fv \Bigr) \dx[x],
\end{align}
minimized over the space $W^{1,p}_g(\W)$. So the density function $f(\xi,v,x)=(1/p)|\xi|^p- F(x)v$, which  satisfies all of the assumptions in the theory provided $F\in L^q(\W)$ for some $q>p$.  Moreover, the map $(\xi,v)\to f(\xi,v,x)$ is strictly convex for a.e~$x\in\W$.  Thus there is a unique minimizer $u\in W_g^{1,p}(\W)$.  The Euler-Lagrange equation \eqref{eqn:eulerlagrange} of $\mJ^p$ yields the following $p-$Laplace problem:
\begin{subequations} \label{dgcalc:eqn:plaplace}
\begin{alignat}{2}
	-\div(|\grad u|^{p-2}\grad u) &= F  &&\qquad \text{ in } \W, \\
	u &= g  &&\qquad \text{ on }\partial\W.
\end{alignat}
\end{subequations}
Note that $p=2$ gives the standard Poisson problem; however, here $p$ can be any number such that $1<p<\infty$.  We will test cases in both one and two-dimensions, varying the value of $p$.  We compute 
the discrete solution $u_h$ by minimizing the discrete energy \eqref{dgcalc:eqn:ritzmini} with $k=1$ and using the Matlab built-in function \verb|fminunc| with the initial guess $0$ unless otherwise specified.  We also let $\ge\equiv 10$ for every test unless otherwise stated.  

\medskip
{\bf Test 1 ($d=1$, $p>2$).}
Let $p=2.5$, $d=1$, $\W=(0,1)$ and $g = x$.  Choose $F(x) = -9\sqrt{3}x^2$ so that the exact solution is $u(x)=x^3$.  Table \ref{dgcalc:table:p2.5errors} shows the errors and rates in the $L^p$ and $W^{1,p}$-norm for $u-u_h$, where $u_h\in V_h$ is the discrete minimizer of \eqref{dgcalc:eqn:ritzmini}. 
The numerical results clearly indicate that the proposed DR method is converging to the correct solution and we have optimal order convergence in the $W^{1,p}$ semi-norm, but we have sub-optimal convergence rate in the $L^p$ norm.

\begin{table}[h]  
\caption{The $L^p$ and $W^{1,p}(\Th)$ errors and rates of convergence in $h$ for the discontinuous Ritz
	 method \eqref{dgcalc:eqn:ritzmini} applied to $\mJ^{p}$ from \eqref{dgcalc:eqn:plapenergy} where $p=2.5$ and $\ge\equiv 100$.}                              
\centering                                     
\begin{tabular}{|c|c|c|c|c|c |}    
\hline                                         
$1/h$ & $\|u-u_h\|_{L^p(\W)}$ & rate & $\|\grad u-\grad_h u_h\|_{L^p(\W)}$ & rate & iterations\\              
\hline                                    
10 & 5.12e-03 & - & 1.10e-01 & - & 72\\ 
\hline                                    
20 & 3.06e-03 & 0.74 & 5.51e-02 & 0.99 & 137\\ 
\hline                                    
40 & 1.67e-03 & 0.88 & 2.76e-02 & 1.00 & 276\\ 
\hline                                    
80 & 8.74e-04 & 0.93 & 1.38e-02 & 1.00 & 555\\ 
\hline                                    
160 & 4.49e-04 & 0.96 & 6.92e-03 & 1.00 & 1104\\
\hline                                    
320 & 2.28e-04 & 0.98 & 3.46e-03 & 1.00 & 2123\\
\hline    
\end{tabular}                                  
\label{dgcalc:table:p2.5errors}                     
\end{table}

\medskip
{\bf Test 2 ($d=1$, $p<2$). }
Let $p=1.5$, $d=1$, $\W=(0,1)$ and $g = 0$.  Choose $F(x)$ such that the exact solution is $u(x)=\sin(\pi x)$.  Note that
\[
w:=|\grad u|^{p-2}\grad u = \frac{\sqrt{\pi}\cos(\pi x)}{\sqrt{|\cos(\pi x)|}}
\]
is not classically differentiable since $\cos(\pi x)$ is both positive and negative on $(0,1)$, but $w\in W^{1,q}(\W)$ for all $1<q<2$ with $\grad w$ having a discontinuity at $x=0.5$.  Table \ref{dgcalc:table:p1.5errors} shows the $L^p$ and $W^{1,p}$ errors and rates of convergence for the DR  method.  We see that the rates of convergence are suboptimal for both the $L^p$ and $W^{1,p}$ errors.  This is most likely due to the degeneracy of the PDE since largest error occurs at $x=0.5$ where $w$ is 0.  This claim is supported by Figure \ref{dgcalc:table:p1.5fig}.

\begin{table}[h]
\caption{The $L^p$ and $W^{1,p}(\Th)$ errors and rates of convergence in $h$ for the discontinuous Ritz method \eqref{dgcalc:eqn:ritzmini} applied to $\mJ^{p}$ from \eqref{dgcalc:eqn:plapenergy} where $p=1.5$}                            
\centering                                     
\begin{tabular}{|c|c|c|c|c|c|}    
\hline                                         
$1/h$ & $\|u-u_h\|_{L^p(\W)}$ & rate & $\|\grad u-\grad_h u_h\|_{L^p(\W)}$ & rate & iterations\\              
\hline                                    
10 & 8.50e-02 & - & 3.19e-01 & - & 79\\ 
\hline                                    
20 & 5.77e-02 & 0.56 & 2.06e-01 & 0.63 & 142\\ 
\hline                                    
40 & 4.03e-02 & 0.52 & 1.38e-01 & 0.57 & 242\\ 
\hline                                    
80 & 2.85e-02 & 0.50 & 9.56e-02 & 0.53 & 415\\ 
\hline                                    
160 & 2.02e-02 & 0.50 & 6.69e-02 & 0.51 & 713\\
\hline                                    
320 & 1.43e-02 & 0.50 & 4.72e-02 & 0.51 & 1244\\
\hline    
\end{tabular}                                  
\label{dgcalc:table:p1.5errors}                     
\end{table}

\begin{figure}
\includegraphics[width=1.0\textwidth]{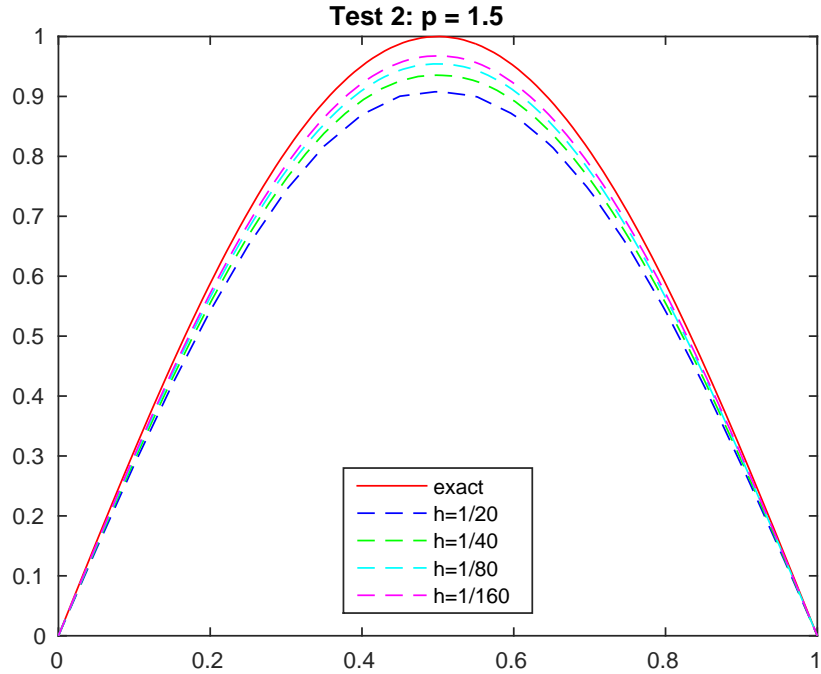}
\caption{The plots of $u$ and $u_h$ where $u$ is the exact minimizer for $\mJ^{p}(\cdot)$ from \eqref{dgcalc:eqn:plapenergy} with $p=1.5$ and $u_h$ is the discrete minimizer from \eqref{dgcalc:eqn:ritzmini}. Here $h=1/20,1/40,1/80,1/160$.}
\label{dgcalc:table:p1.5fig}  
\end{figure}

\medskip
{\bf Test 3 (Unknown solution case).}
Let $p=8.3$, $d=1$, $\W=(0,1)$ and $g = x/2$.  We choose $F(x) = \frac{2000x}{(100x^2+1)^2}$.  Since we do not know the exact solution to this problem, we choose $u^{FE}$ such that
\[
	u^{FE} = \argmin_{v\in S}\mJ(v_h)
\] 
where $S\subset W^{1,p}(\W)$ is the $C^0$ conforming Lagrange finite element space with $k=1$ and $h=1/640$. Table \ref{dgcalc:table:p8.3errors} shows the errors and rates in the $L^p$ and $W^{1,p}$-norm for $u^{FE}-u_h$, where $u_h\in V_h$ is the discrete minimizer of \eqref{dgcalc:eqn:ritzmini}.  For this test we set an initial guess of $u_0 = x/2$.   We see that the method is converging with a suboptimal rate of convergence in the $L^p$-norm.

\begin{table}[h]
\caption{The $L^p$ and $W^{1,p}$ errors and rates of convergence in $h$ for the discontinuous Ritz method \eqref{dgcalc:eqn:ritzmini} applied to $\mJ^{p}$ from \eqref{dgcalc:eqn:plapenergy} where $p=8.3$}                            
\centering                                     
\begin{tabular}{|c|c|c|c|c|c|}    
\hline                                         
$1/h$ & $\|u^{FE}-u_h\|_{L^p(\W)}$ & rate & $\|\grad u^{FE}-\grad_h u_h\|_{L^p(\W)}$ & rate & iterations\\              
\hline                                    
10 & 1.95e-02 & - & 6.43e-01 & - & 95\\ 
\hline                                    
20 & 9.28e-03 & 1.08 & 4.23e-01 & 0.61 & 255\\ 
\hline                                    
40 & 4.46e-03 & 1.06 & 6.33e-02 & 2.74 & 1026\\ 
\hline                                    
80 & 2.21e-03 & 1.01 & 2.76e-01 & -2.12 & 1618\\ 
\hline                                    
160 & 1.10e-03 & 1.01 & 2.27e-01 & 0.27 & 2763\\
\hline                                    
320 & 5.50e-04 & 1.00 & 1.77e-01 & 0.35 & 4930\\
\hline    
\end{tabular}                                  
\label{dgcalc:table:p8.3errors}                     
\end{table}

\medskip
{\bf Test 4 ($d=2$, $p>2$).}
Let $p=2.5$, $d=2$, $\W=(0,1)^2$.  Choose $F,g$ such that the exact solution is $u(x,y) = e^{x+y}$.  For this test we choose $\ge\equiv 100$.  Table \ref{dgcalc:table:d2p2.5g100errors} shows the errors and rates in the $L^p$ and $W^{1,p}$-norm for $u-u_h$, where $u_h\in V_h$ is the discrete minimizer  of \eqref{dgcalc:eqn:ritzmini}. Again for problems with smooth solutions and lack degeneracy in the interior, the table indicates that the DR method is converging to the correct solution and we have an optimal order convergence rates in the $W^{1,p}$ semi-norm with a sub-optimal convergence rate in the $L^p$-norm.

\begin{table}[h]  
\caption{The $L^p$ and $W^{1,p}$ errors and rates of convergence in $h$ for the discontinuous Ritz method \eqref{dgcalc:eqn:ritzmini} applied to $\mJ^{p}$ from \eqref{dgcalc:eqn:plapenergy} where $d=2$, $p=2.5$, and $\ge\equiv 100$.}                              
\centering                                     
\begin{tabular}{|c|c|c|c|c|}    
\hline                                         
$1/h$ & $\|u-u_h\|_{L^p(\W)}$ & rate & $\|\grad u-\grad_h u_h\|_{L^p(\W)}$ & rate\\              
\hline                                    
4 & 2.01e-02 & - & 2.79e-01 & - \\ 
\hline                                    
8 & 1.04e-02 & 0.94 & 1.33e-01 & 1.07 \\ 
\hline                                    
16 & 5.32e-03 & 0.98 & 6.36e-02 & 1.07 \\ 
\hline                                    
32 & 2.68e-03 & 0.99 & 3.06e-02 & 1.06 \\ 
\hline                                       
\end{tabular}                                  
\label{dgcalc:table:d2p2.5g100errors}                     
\end{table}


\begin{thebibliography}{10}
	
	\bibitem{Sp:BS}
	S.~C. Brenner and L.~R. Scott.
	\newblock {\em The Mathematical Theory of Finite Element Methods}, volume~15 of
	{\em Texts in Applied Mathematics}.
	\newblock Springer, New York, third edition, 2008.
	
	\bibitem{Buffa2009}
	A.~Buffa and C.~Ortner.
	\newblock Compact embeddings of broken {S}obolev spaces and applications.
	\newblock {\em IMA journal of numerical analysis}, 29(4):827--855, 2009.
	
	\bibitem{Cockburn1998}
	B.~Cockburn and C.-W. Shu.
	\newblock The local discontinuous {G}alerkin method for time-dependent
	convection-diffusion systems.
	\newblock {\em SIAM Journal on Numerical Analysis}, 35(6):2440--2463, 1998.
	
	\bibitem{Dacorogna2007}
	B.~Dacorogna.
	\newblock {\em Direct Methods in the Calculus of Variations}, volume~78.
	\newblock Springer Science \& Business Media, 2007.
	
	\bibitem{Dacorogna2015b}
	B.~Dacorogna.
	\newblock {\em Introduction to the Calculus of Variations}.
	\newblock Imperial College Press, London, third edition, 2015.
	
	\bibitem{Evans2010}
	L.~C. Evans.
	\newblock {\em Partial Differential Equations}, volume~19 of {\em Graduate
		Studies in Mathematics}.
	\newblock American Mathematical Society, Providence, RI, second edition, 2010.
	
	\bibitem{Feng2013}
	X.~Feng, T.~Lewis, and M.~Neilan.
	\newblock Discontinuous {G}alerkin finite element differential calculus and
	applications to numerical solutions of linear and nonlinear partial
	differential equations.
	\newblock {\em Journal of Computational and Applied Mathematics}, 299:68--91,
	2016.
	
	\bibitem{FengLi2015}
	X.~Feng and Y.~Li.
	\newblock Analysis of symmtric interior penalty discontinuous {G}alerkin
	methods the {A}llen-{C}ahn equation and its sharp interface limit the mean
	curvature flow.
	\newblock {\em IMA Journal on Numerical Analysis}, 35:1622--1651, 2015.
	
	\bibitem{Furihata2010}
	D.~Furihata and T.~Matsuo.
	\newblock {\em Discrete Variational Derivative Method: A Structure-preserving
		Numerical Method for Partial Differential Equations}.
	\newblock CRC Press, 2010.
	
	\bibitem{Lewis2014}
	T.~Lewis and M.~Neilan.
	\newblock Convergence analysis of a symmetric dual-wind discontinuous
	{G}alerkin method.
	\newblock {\em Journal of Scientific Computing}, 59(3):602--625, 2014.
	
	\bibitem{Nochetto2017}
	R.~H. Nochetto, S.~W. Walker, and W.~Zhang.
	\newblock A finite element method for nematic liquid crystals with variable
	degree of orientation.
	\newblock {\em SIAM Journal on Numerical Analysis}, 55(3):1357--1386, 2017.
	
	\bibitem{DGCalccode}
	S.~Schnake.
	\newblock A {M}atlab toolbox for the discontinuous {G}alerkin finite element
	numerical calculus, 2014.
	\newblock downloadable at
	https://bitbucket.org/stefanschnake/dgfenumericcalculus.
	
	\bibitem{Schnake2017}
	S.~Schnake.
	\newblock {\em Numerical Methods for Non-divergence Form Second Order Elliptic
		Partial Differential Equations and Discontinuous Ritz Methods for Problems
		from the Calculus of Variations}.
	\newblock PhD thesis, The University of Tennessee, 2017.
	
	\bibitem{Stamm} Stamm, B., Wihler, T.P.: A total variation discontinuous Galerkin approach for image restoration. Int. J. Numer. Anal. Model. 12(1), 81–93 (2015)
	
\end{thebibliography}
\bibliographystyle{abbrv}

\end{document}